\documentclass[a4paper,11pt,reqno]{amsart}               
\usepackage[T1]{fontenc}                                                                       
\usepackage[utf8]{inputenc}                                                                  
\usepackage{comment}                                                                         
\usepackage{mparhack}                                                                         
\usepackage{amsmath,amssymb,amsthm,mathrsfs,eucal}                                                         
\usepackage{booktabs}                                                                          
\usepackage{graphicx,subfig}  
\usepackage{wrapfig}                                                                              
\usepackage[bookmarks=true,colorlinks=true]{hyperref}                      
\usepackage{bm}

\newtheorem{teo}{Theorem}[section]

\title
[Global and Local stability]{Global and Local stability for a non-linear hyperbolic system model for the role of stem cells in physiological homeostasis}

\author{Laura Di Bernardo}
\address{Laura Di Bernardo - DISIM - Department of Information Engineering, Computer Science and Mathematics, University of L'Aquila, Via Vetoio 1 (Coppito)
67100 L'Aquila (AQ) - Italy}
\email{laura.dibernardo@univaq.it}

\author{Donatella Donatelli}
\address{Donatella Donatelli- DISIM - Department of Information Engineering, Computer Science and Mathematics, University of L'Aquila, Via Vetoio 1 (Coppito)
67100 L'Aquila (AQ) - Italy}
\email{donatella.donatelli@univaq.it}

\date{}

\begin{document}

\begin{abstract}
In this paper we propose an existence and uniqueness theory for the solutions of a system of non-linear hyperbolic conservation laws, structured in age and maturity variables, representing a tissue environment. In particular we are interested in the investigation of the role of stem cells in its homeostasis. The main result presented in this paper is the consistence of the stability results arising from the analisys of the model  we designed with the experimental observations on which several branches of medicine are currently attempting to trade on their research activity.
\end{abstract}

\keywords{Age-dependent, Gompertzian growth, stem cells}

\maketitle

\section{Introduction}
A good population model has to reproduce satisfactorily behaviors observed by Biologists. A significant class of models which are able to fulfill this requirements are the structured one. They describe the distribution of individuals through different classes, determined by individual differences related to decisive factors of the dynamics we are interested in.
This type of models have the advantage to be able to make a connection between the individual and the population level so that they are able to catch dynamical behaviors which other models can not.

The first authors that attempted to construct a structured model, as a general frame able to describe the intrinsic non linearity of real biological systems, where Gurtin and MacCamy in \cite{rif4}. Their work, afterwards, has inspired the rise of a wide literature; see \cite{rif7}, \cite{rif3}, \cite{rif2},  \cite{rif5}, \cite{rif6}, \cite{rif1}.

In this paper we design a model able to confirm the experimental evidence that stem cells are able to maintain and eventually recover the homeostasis of both animal and vegetal tissues.

In order to do this, our system has been outlined in three groups of cells cohabiting and co-existing, has follows:
\begin{itemize}
\item \emph{Stem cells} that are undifferentiated cells able both to self-convert into specialized units or just to divide depending on the external signals.
\item \emph{Proliferating cells} that are the healthy building blocks of  the tissue as well as the headquarters of its specific physiological functions.
\item \emph{Damaged cells} which are cells that, due to an error in their protein synthesis traffic, caused by alterations in their genetic patrimony, show an impaired physiological activity. In particular, in this paper, they are interpreted as potentially cancerous cells.
\end{itemize}
To describe the dynamical behavior of the first two groups it has been used a model given by an age and maturity-structured system of hyperbolic conservation laws, widely used by M.C. Mackey, M. Adimy and R. Rudnicki (see \cite{rif10}, \cite{rif8}, \cite{rif9},  \cite{rif11}) of the type:
\begin{eqnarray}
\label{model}
z_a(m,a,t)+\left(u(m)z(m,a,t)\right)_m+z_t(m,a,t)&=&f(t,m,Z(m,t))
\end{eqnarray}
\begin{eqnarray}
\label{bc}
Z(m,t)&=&\int_{0}^{\tau}z(m,a,t)da,
\end{eqnarray}
with suitable initial and boundary conditions, where $z(m,a,t)$ is the density of the population of age \emph{a} and maturity \emph{m} at time \emph{t}, $Z(m,t)$ represents the total population over all the age ranges, $u(m)$ is its velocity of maturation, while $f(t,m,Z(m,t))$ represents the cell flux in and out for each group.\\
Here, in order to describe the third group of cells, instead, we consider more appropriate to use a structured partial differential equation of Gompertzian type (see \cite{rif12}) in which the term $f(t,m,Z(m,t))$ assumes the non-linear form
\begin{displaymath}
f(t,m,Z(m,t))=z-z\ln(z).
\end{displaymath}
In the papers \cite{rif10}, \cite{rif8}, \cite{rif9},  \cite{rif11}  the cells are divided in only two groups: proliferating and resting ones; both of them are allowed to be stem cells when their maturity variable is set at zero level. In particular, in $[10]$ stability results related to cells of maturity $m$ are recovered by the stability of the system at the starting point $m=0$.\\
In our paper we extend the existence, uniqueness and stability analysis of a system of equations of the type \eqref{model}-\eqref{bc} studied in \cite{rif10}, \cite{rif8}, \cite{rif9},  \cite{rif11}, to the system provided with damaged cells, represented by the Gompertzian equation.

In our model it has no biological sense to analyze the system at $m=0$ since the dynamics of this group of cells is represented already by a dedicated equation of the type \eqref{model}-\eqref{bc}.

This plan of the paper is as follows. In Section 2 we present our model. In  Section 3, by means of the variation of constants formula, we provide a solution for any equations. The Section $4$ is devoted to the study of the existence and uniqueness of solutions for our model and in  Section 5 we study its stability. In Section 6 we perform some numerical simulations in order to confirm the stability result. We end the paper with some comments that can be found in Section 7.
\section{The model}
In order to fix the model analyzed in this paper we assume that
\begin{itemize}
\item[\bf{A1.}] Cellular proliferation proceeds simultaneously with cellular maturation.
\item[\bf{A2.}] If $\emph{m}$ is the maturation of the mother cell at the moment it starts to divide, the generated daughter cells will have a maturation $g(m)$, where g is a strictly increasing continuous function such that  $g(m)\leq m$. 
\item[\bf{A3.}] In order to consider a kind of damages that stress aggression and speed of cell reproduction, the velocity of maturation of healthy cells $v(m)$ is supposed to be different from velocity of maturation $u(m)$ of cancerous cells.
Moreover, the speeds of cell maturation, are functions $u,v:[0,m_F]\rightarrow[0,\infty)$ continuously differentiable and such that $v(0)=v(m_F)=0$, $u(0)=u(m_F)=0$ and $v(m)>0$, $u(m)>0$ for all $ m \in (0,m_F)$.
\item[\bf{A4.}] In order to stress the aggressiveness of damaged cells, we set furthermore the function $\sigma(m)$, that represent the physiologic rate of loss related to this group, such that $0<\sigma(m)<< 1$.
\end{itemize}
\vspace{3mm}
We give now an accurate description of each group of cells which constitute our system.
\subsection{The stem group} 
We denote by $n(t,m,a)$ the density of undifferentiated units, that are in a resting phase waiting for an external signal. They are supposed to be lost with a random rate $\delta(m)$ (for physiological reasons) and to be introduced in the proliferating phase at a continuous rate $\beta(N(m,t),m)$ (specific of any population). Hence their dynamics is described by the conservation law
\begin{displaymath}
n_a+n_t+\left(n v(m)\right)_m=-\left(\delta(m)+\beta\left(m,N(t,m)\right)\right)n
\end{displaymath}
with

{\bf Initial condition:}
$n(0,m,a)=\phi(a,m)$, for any $(m,a)\in[0,1]\times[0,\infty)$ where $\phi \in C\left([0,1]\times[0,\infty)\right)$ and
\begin{displaymath}
\lim_{a\rightarrow \infty}\phi(a,m)=0.
\end{displaymath}

{\bf Boundary condition:}
\begin{displaymath}
n(t,m,0)= \begin{cases}2(g^{-1})(m)p(t,(g^{-1})(m),\tau) & \textrm{if $t>\overline{\tau}$}\\
		0 & \textrm{if $t<\overline{\tau}$}
		\end{cases}
\end{displaymath}
that represents the flux of daughter cells into the resting phase.

The functions  $\delta(m)$ and $\beta(N(m,t),m)$ are supposed to be positive and continuous.\\
The total density of stem cells is assumed to be 
\begin{displaymath}
N(t,m)=\int_{0}^{\infty}n(t,m,a)da.
\end{displaymath}
\subsection{The proliferating group}
We denote by $p(t,m,a)$ the density of cells that go on through their cell cycle and carry out properly the physiological activity of the tissue. They are supposed to have age \emph{a} and maturity \emph{m} that proceed respectively from $(a,m)=(0,0)$ (that represents the moment in which the daughter cell is generated) to $(a,m)=(\overline{\tau},m_F)$ (which represents the time at which, in turn, the same unit will divide to give rise to the next generation of daughter cells).\\
Moreover, at this phase, they can be lost randomly at an age independent rate $\gamma(m)$ supposed to be a continuous and positive function. So the conservation law for the these cells is given by
 \begin{displaymath}
p_a+p_t+\left(p v(m)\right)_m=-\gamma(m)p
\end{displaymath}
with

{\bf Initial condition:}
$p(0,m,a)=\psi(a,m)$ for any $(m,a)\in[0,1)\times[0,\overline{\tau})$ where $\psi \in C\left(\left[0,1\right]\times\left[0,\overline{\tau}\right]\right)$

{\bf Boundary condition:}
$p(t,m,0)=\beta\left(m, N(t,m)\right)N(t,m)$ that represents the flux of resting cells into the active group.

The total density of proliferating cells is 
\begin{displaymath}
P(t,m)=\int_{0}^{\overline{\tau}}p(t,m,a)da.
\end{displaymath}
\subsection{Damaged cancerous type cells}
Finally,  we denote by $c(t,m,a)$ the density of cells that instead go on through their cell cycle with a damaged physiological activity. They are supposed to be lost physiologically with a rate $\sigma(m)$ and to be converted from the proliferating phase with a rate $\alpha\left(m, P(m,t)\right)$ (specifically related to the kind of damage occurred in their physiological activity).

Also in this case, both $\alpha\left(m, P(m,t)\right)$ and $\sigma(m)$, are supposed to be continuous and positive functions.

Moreover, any damage can occur in the cell from the age $a=\underline{\tau}$ (supposed to be the moment in which any cell start the replication of its genetic patrimony) to the age $a=\overline{\tau}$ (that, as before, is the moment in which they divide).\\
Their dynamics is described by the following Gompertzian PDE,
\begin{displaymath}
c_a+c_t+\left(c u(m)\right)_m=\alpha\left(m, P(m,t)\right)c-\sigma(m)c\ln(c)
\end{displaymath}
with

{\bf Initial condition:}
$c(0,m,a)=\Omega(a,m)$ for any $(m,a)\in[0,m_F)\times[\underline{\tau},\overline{\tau})$, where $\Omega \in C\left(\left[0,1\right]\times\left[\underline{\tau},\overline{\tau}\right]\right)$.
Moreover, for this group of cells, consistently with the genetic damage processes that cause the apparition of cancerous cells, we make the following additional assumption: 
\begin{equation}
\label{A5}
c(0,m,a)<1 
\end{equation}

{\bf Boundary condition:}
$$c(t,m,\underline{\tau})=\alpha\left(m,P(m,t)\right)P(t,m)$$
 that represents the flux of resting cells into the active group.

Let us observe that, in order to be consistent with the Biology of the process, we suppose that the damage process can't occur in the cell dynamics before a certain age $\underline{\tau}$.

The total density of the damaged cells is
\begin{displaymath}
 C(t,m)=\int_{\underline{\tau}}^{\overline{\tau}}c(t,m,a)da.
\end{displaymath}

\vspace{3mm}
Finally, we obtain the following model
\begin{eqnarray}
\label{eqp}
		p_a+p_t+\left(p v(m)\right)_m&=&-\gamma(m)p,\\
\label{eqn}
		n_a+n_t+\left(n v(m)\right)_m&=&-\left(\delta(m)+\beta\left(m,N(t,m)\right)\right)n,\\
\label{eqc}
		c_a+c_t+\left(c u(m)\right)_m&=&\alpha\left(m, P(m,t)\right)c-\sigma(m)c\ln(c),\\
\label{totp}
		P(t,m)&=&\int_{0}^{\overline{\tau}}p(t,m,a)da,\\
\label{totn}
		N(t,m)&=&\int_{0}^{\infty}n(t,m,a)da,\\
\label{totc}
		C(t,m)&=&\int_{\underline{\tau}}^{\overline{\tau}}c(t,m,a)da,
		\end{eqnarray}
		with the boundary conditions
		\begin{eqnarray}
	\label{bcp}
		p(t,m,0)&=&\beta\left(m, N(t,m)\right)N(t,m),\\
	\label{bcn}
		n(t,m,0)&=& \begin{cases}2(g^{-1})(m)p(t,(g^{-1})(m),\tau) & \textrm{if $t>\overline{\tau}$}\\
		0 & \textrm{if $t<\overline{\tau}$},
		\end{cases}\\
	\label{bcc}
		c(t,m,\underline{\tau})&=&\alpha\left(m, P(m,t)\right)P(t,m),
		\end{eqnarray}
		and the initial conditions
		\begin{eqnarray}
		\label{inp}
		p(0,m,a)&=&\psi(a,m),\\
		\label{inn}
		n(0,m,a)&=&\phi(a,m),\\
		\label{inc}
		c(0,m,a)&=&\Omega(a,m).
		\end{eqnarray}
		\section{The solution}
		Since in this paper we want to study the stability of the total population of cells, here we recover the evolution equations  for $N(m,t)$, $P(m,t)$ and $C(m,t)$. The main tools will be the use of the method of characteristics and of the variation of constants formula.\\
		We start by the analysis of the equations \eqref{eqp},\eqref{totp}, \eqref{bcp} and \eqref{inp} namely
		\begin{eqnarray*}
		p_a+p_t+\left(p v(m)\right)_m&=&-\gamma(m)p\\
		P(t,m)&=&\int_{0}^{\overline{\tau}}p(t,m,a)da\\
		p(t,m,0)&=&\beta\left(m,N(t,m)\right)N(t,m)\\
		p(0,m,a)&=&\psi(a,m).
		\end{eqnarray*}
We define the characteristic system as follows
\begin{eqnarray}
\label{carp}
\begin{cases}
a'(s)=1 \\
t'(s)=1 \\
m'(s)=v(m)\\
p'\left(a(s),t(s),m(s)\right)=-\left(\gamma(m)+v'(m)\right)p\left(a(s),t(s),m(s)\right).
\end{cases}
\end{eqnarray}
In order to avoid the difficulty in solving the last equation of the system \eqref{carp}, due to its strong non-linearity, we define  the characteristic curve $s \rightarrow \pi_s(m)$ through $(0,m)$ with $m\in[0,1]$ as a solution of 
\begin{eqnarray*}
\frac{d\pi_s(m)}{ds}(s)&=&v\left(\pi_s(m)\right) \quad s\leq0\\
\pi_{0}(m)&=&m
\end{eqnarray*}
such that $\pi_{s}(0)=0$.\\
 Then, the solution of \eqref{eqp}, \eqref{totp}, \eqref{bcp} and \eqref{inp}  is given by
	\begin{eqnarray}
	\label{solp}
	p=\begin{cases}
	\varsigma(m,t) \Gamma\left(\pi_{-t}(m);a-t\right) & \textrm{if $a>t$}\\
	\varsigma(m,t)N\left(\pi_{-a}(m), t-a \right)\beta\left(N\left(\pi_{-a}(m), t-a\right),\pi_{-a}(m)\right) & \textrm{if $a<t$}
	\end{cases}
	\end{eqnarray}
	with $\varsigma(m,t)=exp\left(-\int_{0}^{t}\gamma\left(\pi_{-s}(m)\right)+v'\left(\pi_{-s}(m)\right)ds\right)$.\\
By integrating  \eqref{eqp} over the age variable in $[0,\overline{\tau}]$ one obtains
\begin{eqnarray}
\label{Psol1}
P_t(m,t)+\left(P(m,t)v(m)\right)_m+p(\overline{\tau},m,t)-p(0,m,t)=-\gamma(m)P(t,m).
\end{eqnarray}
By using the boundary condition \eqref{bcp} and the solution \emph{p} computed in \eqref{solp}, we can  write \eqref{Psol1} as
	\begin{eqnarray}
	\label{eqP}
	P_t \!\!\!\!\!&+&\!\!\!\!\!(P(t,m)v(m))_m=\gamma(m)P(t,m)+ \beta\left( N(t,m),m\right)N(t,m) \\
	\!\!\!\!\!&-&\!\!\!\!\!\begin{cases}\varsigma(m,t) \Gamma\left(\pi_{-t}(m);\overline{\tau}-t\right) & \textrm{if $a>t$}\\
	\varsigma(m,t) N\left(\pi_{-t}(m), t-\overline{\tau} \right)\beta\left( N\left(\pi_{-\overline{\tau}}(m),t-\overline{\tau}\right),\pi_{-\overline{\tau}}(m)\right) & \textrm{if $a<t$} \nonumber.
	\end{cases}
	\end{eqnarray}
From which we get, first by integrating over the variable $m$, and then by  using the variation of constants formula with initial datum  $P(t,m)=\psi(m,t)$,
\begin{eqnarray}
\label{Psol}
P(t,m)\!=\!e^{-(v'(m)-\gamma(m))t}\!\left(\!\psi(m,t)\!\!+\!\!\int_{0}^{t}\!\!\!e^{(v'(m)-\gamma(m))s}\!\!\!\int_{0}^{m}\!\!\!F(N(s,z),z)dzds\right)
\end{eqnarray}
where 
\begin{eqnarray*}
&F&\!\!\!\!\! \left(N(m,t),m\right)=\beta\left(N(m,t),m\right)N(m,t)\\
&-&\!\!\!\!\! \begin{cases}\varsigma(m,t) \Gamma\left(\pi_{-t}(m);\overline{\tau}-t\right) & \textrm{if $a>t$}\\
	\varsigma(m,t) N\left(\pi_{-t}(m), t-\overline{\tau} \right)\beta\left( N\left(\pi_{-\overline{\tau}}(m),t-\overline{\tau}\right),\pi_{-\overline{\tau}}(m)\right) & \textrm{if $a<t$} \nonumber.
	\end{cases}
\end{eqnarray*}
	From \eqref{eqP} we can see that, in order to obtain the explicit solution  for $P(t,m)$ we need to recover the solution for $N(t,m)$.\\
As before, by integration over the age variable the equation \eqref{eqn}, we obtain
\begin{eqnarray}
\label{eqN}
N_t+\left(N(t,m)v(m)\right)_m=-\left(\delta(m)+\beta\left(m,N(t,m)\right)\right)N(t,m)\nonumber \\
+\begin{cases}2(g^{-1})(m)p(t,(g^{-1})(m),\tau) & \textrm{if $t>\overline{\tau}$}\\
		0 & \textrm{if $t<\overline{\tau}$}.
		\end{cases}
\end{eqnarray}
	By plugging \eqref{solp} in \eqref{eqN} and by applying again the method of characteristics we obtain the following solution for $N$,
	\begin{eqnarray}
	\label{Nsol}
	N(t,m)&=&\phi \left(\overline{\tau},\pi_{-(t-\overline{\tau})}(m)\right)K(t-\overline{\tau},m)\nonumber \\
	&-&\int_{\overline{\tau}}^{t}K(t-s,m)\beta\left[N(s,\pi_{-(t-s)}(m)),m\right]N(s,\pi_{-(t-s)}(m))ds\nonumber \\
	&+& \int_{\overline{\tau}}^{t}K(t-s,m)f_2\left(\pi_{-(t-s)}(m),m\right)ds\nonumber \\
	&\times& \beta\left[N(s-\overline{\tau},g^{-1}\left(\pi_{-(t-s)}(m)),m\right)\right] \nonumber \\
	&\times& N\left(s-\overline{\tau},g^{-1}\left(\pi_{-(t-s)}(m)\right)\right)
	\end{eqnarray}
	with initial datum $N(t,m)=\phi(t,m)$ where, $\phi \in C\left([0,\overline{\tau}]\times[0,g(1)]\right)$ and
	\begin{eqnarray*}
	f_2(m)&=&2(g^{-1})'(m)\varsigma \left(g^{-1}(m),t\right)\\
	K(m,t)&=&\exp\left(-\int_{0}^{t}\delta\left(\pi_{-\sigma}(m)\right)+v'\left(\pi_{-\sigma}(m)\right)d\sigma\right).
	\end{eqnarray*}
Similarly, we can associate to the problem \eqref{eqc}, \eqref{totc}, \eqref{bcc} and \eqref{inc} the following characteristic system 
\begin{eqnarray}
\begin{cases}
a'(s)=1\\
t'(s)=1\\
m'(s)=u(m)\\
c'\left(a(s),t(s),m(s)\right)=-u'(m)c\left(a(s),t(s),m(s)\right)
\\\hspace{3cm}+\alpha(P(m,t),m)c\left(a(s),t(s),m(s)\right)\\
\hspace{3cm}-\sigma(m(s))c\left(a(s),t(s),m(s)\right)\ln\left(c\left(a(s),t(s),m(s)\right)\right).
\end{cases}
\end{eqnarray}
By using the method of characteristic and the standard solution for Gompertzian equations we get
	\begin{eqnarray}
	\label{solc}
	c(t,m,a)\!\!=\!\!\exp\!\!{\left[ \frac{\widetilde{\alpha}(m, P(m,t))}{\sigma(m)}\!\!-\!\!\left(\frac{\widetilde{\alpha}(m, P(m,t))}{\sigma(m)}\!\!-\!\!\ln{(\Omega(m,a))}\!\!\right)e^{-\sigma(m)t}\!\right]} 
	\end{eqnarray}
	with 
	\begin{equation}
	\label{alpha}
	\widetilde{\alpha} \left(m,P(t,m)\right)=u'(m)-\alpha\left(m,P(t,m)\right).
	\end{equation}
	Finally, by using \eqref{solc} and integrating \eqref{eqc} over the age variable in $a\in[\underline{\tau},\overline{\tau}]$ we get
	\begin{eqnarray}
	\label{eqC}
	C_t\!\!\!\!\!&+&\!\!\!\!\!(C(m,t)u(m))_m=\alpha(P(m,t),m)C(m,t)+\alpha(P(m,t),m)P(m,t)\nonumber \\
	\!\!\!\!\!&-&\!\!\!\!\!\sigma(m)\int_{\underline{\tau}}^{\overline{\tau}}\!\!c\log(c)da \nonumber\\
	\!\!\!\!\!&-&\!\!\!\!\!\exp\!\!\left(\frac{\widetilde{\alpha}(P(m,t),m)}{\sigma(m)}-\left(\frac{\widetilde{\alpha}(P(t,m),m)}{\sigma(m)}-\ln{(\Omega(m,a))}\!\!\!\right)e^{-\sigma(m)t}\right),
	\end{eqnarray}
	from which we have
	\begin{eqnarray}
	\label{Csol}
C(t,m)&=&\Omega(m,a)\Big[e^{-\int_{0}^{t}(u'(m)-\alpha(P(m,s),m))ds}\nonumber\\
&+&\int_{0}^{t}e^{\int_{t}^{s}(u'(m)-\alpha(P(m,r),m))dr}\int_{0}^{m}F(P(s,z),z)dzds\Big]
\end{eqnarray}
 where
\begin{displaymath}
F((P(m,t),m)=\alpha(P(m,t),m)P(m,t)-F_{C}
\end{displaymath}
while
\begin{eqnarray*}
F_{C}&=&-\sigma(m)\int_{\underline{\tau}}^{\overline{\tau}}c\log(c)da\\  
&-&exp\left(\frac{\widetilde{\alpha}(P(m,t),m)}{\sigma(m)}-\left(\frac{\widetilde{\alpha}(P(t,m),m)}{\sigma(m)}-\ln(\Omega(m,a))\right)e^{-\sigma(m)t}\right)
\end{eqnarray*}
Similarly, as before, we note that the explicit solution for $C$ is a function of $P$ and $N$.
\section{Existence and uniqueness}
In this section we focus on the existence of solutions for \emph{N}, \emph{P} and \emph{C}.\\
We start with \emph{N} and we rewrite \eqref{eqN} in the following compact form
\begin{eqnarray}
N_t+v(m)N_m=G\left(m,\overline{N}\right)N+F_N(m,t),
\end{eqnarray}
where 
\begin{displaymath}
G(m,\overline{N})=-\left[\delta(m)+\beta\left(m, \overline{N}\right)+v'(m)\right]
\end{displaymath}
and
\begin{displaymath} 
\overline{N}=\int_{0}^{m_F}N(t,m)dm
\end{displaymath}
represents the total number of all stem cells over all the maturity range, while
\begin{displaymath}
F_N(m,t)=\begin{cases}2(g^{-1})(m)p(t,(g^{-1})(m),\tau) & \textrm{if $t>\overline{\tau}$}\\
		0 & \textrm{if $t<\overline{\tau}$}.
		\end{cases}
\end{displaymath}
As a consequence \eqref{Nsol} assumes the form
\begin{eqnarray}
N(t,m)&=&\int_{0}^{t} F_N(m,t)\exp\left(\int_{r}^{t}G(\pi_{t-s}m,\overline{N}(s)ds)dr\right) \nonumber \\
&+& N \left(0,\pi_{-t}m\right)\exp\left(\int_{0}^{t}\exp\int_{0}^{t}G(\pi_{t-s}m,\overline{N}(s)ds)\right).
\end{eqnarray}
Now, on the Banach space $C[0,\tau]$ we define the operator $\mathbb{B}$ as follows:\\
$\mathbb{B}:C[0,\tau]\rightarrow [0,\tau]$ such that
\begin{eqnarray}
\mathbb{B}\overline{N}(t)=\int_{0}^{m_F}N(m,t)dm
\end{eqnarray}
 with norm 
\begin{eqnarray}
\label{nrmN}
||N||=e^{-\lambda t}\max_{0\leq t\leq\tau}|N|
\end{eqnarray}
where $\lambda>0$.\\
We want to show that $\mathbb{B}$ is a contractive operator. First of all we observe that by definition of the norm \eqref{nrmN} for any $ s \in[0,\tau]$ we have
\begin{displaymath}
|N_1(s)-N_2(s)|\leq e^{\lambda s} ||N_1-N_2||.
	\end{displaymath}
Since, by their structure, $G$ and its derivative $G_N$ are bounded from above by two positive constants $k_1$ and $k_2$ respectively, we get 
\begin{eqnarray*}
\Big| exp\int_{r}^{t}\left(G(\pi_{t-s}m,\overline{N_1}(s)ds\right)-exp\int_{r}^{t}\left(G(\pi_{t-s}m,\overline{N_2}(s)ds\right) \Big| \\
\leq e^{k_{1}(t-r)}\Big| \int_{r}^{t}G(\pi_{t-s}m,\overline{N_1}(s))ds-\int_{r}^{t}G(\pi_{t-s}m,\overline{N_2}(s))ds \Big|\\
\leq e^{k_{1}t}\int_{r}^{t}k_2 \big| N_1(s)-N_2(s) \big| ds \leq  e^{k_{1}t}\int_{r}^{t}k_2 e^{\lambda s}  ||N_1-N_2||ds\\
\leq \frac{1}{\lambda} k_2 e^{k_{1}t} e^{\lambda t} ||N_1-N_2||.
\end{eqnarray*}
 Due to the initial hypothesis  $\bf{A2}$ on $g(m)$, both $F_N(m,t)$ and $N(m,0)$ are bounded,
  $$|F_N(m,t)|<k_3 \qquad |N(m,0)|<k_4.$$\\
	So it follows that for any $r \in[0,t]$
\begin{eqnarray*}
||\mathbb{B}N_1(t)-\mathbb{B}N_2(t)||\!\!\!&\leq&\!\!\! \frac{1}{\lambda} k_2 e^{k_{1}t} e^{\lambda t} ||N_1-N_2||\left(\int_{0}^{m_F}\!\!\!\!\! \int_{0}^{t}k_3dr dm+\int_{0}^{m_F}\!\!\!\!\!k_4 dm\right)\\
\!\!\!&\leq& \!\!\!\frac{1}{\lambda} k_2 e^{k_{1}t} e^{\lambda t} m_{F}\left(k_{3}t+k_4\right)||N_1-N_2||. 
\end{eqnarray*}
If we choose $\lambda$ sufficiently large, the operator $\mathbb{B}$ is contractive so it admits a unique fixed point $\overline{N}$ such that $\mathbb{B}\overline{N}=\overline{N}$. Consequently, for any given initial condition, the equation \eqref{eqN} has exactly one solution for any $t\in[0,\tau]$. Then by method of steps is possible to iterate the proof for any $ t \geq 0$.

To show the existence and uniqueness for $P$ we use exactly the same procedure taking into account the appropriate initial conditions.\\
Analogously to prove it for $C$ we rewrite \eqref{eqC} in the compact form 
\begin{eqnarray}
C_t+u(m)C_m=H\left(m,\overline{C}\right)C+F_C(m,t),
\end{eqnarray}
where 
\begin{displaymath}
H(m,\overline{C})=-\left[u'(m)-\alpha(P(m,t),m)\right]
\end{displaymath}
and
\begin{displaymath} 
\overline{C}=\int_{0}^{m_F}C(t,m)dm
\end{displaymath}
represents the total number of all damaged cells over all the maturity range, while
\begin{eqnarray*}
F_C(m,t)&=&\alpha(P(m,t),m)P(m,t)-\sigma(m)\int_{\underline{\tau}}^{\overline{\tau}}\!\!c\log(c)da\\
	\!&-&\!exp\left(\frac{\widetilde{\alpha}(P(m,t),m)}{\sigma(m)}-\left(\frac{\widetilde{\alpha}(P(t,m),m)}{\sigma(m)}-\ln(c_0)\right)e^{-\sigma(m)t}\right).
\end{eqnarray*}
As a consequence \eqref{Csol} assumes the form
\begin{eqnarray}
C(t,m)&=&\int_{0}^{t}F_C(m,t)\exp\left(\int_{r}^{t}H(\pi_{t-s}m,\overline{C}(s)ds)dr\right) \nonumber \\
&+& C \left(0,\pi_{-t}m\right)\exp\left(\int_{0}^{t}\exp\left(\int_{0}^{t}H(\pi_{t-s}m,\overline{C}(s)ds)\right)\right).
\end{eqnarray}
Now, on the Banach space $C[0,\tau]$ we define the operator $\mathbb{A}$ as follows:\\
$\mathbb{D}:C[0,\tau]\rightarrow [0,\tau]$ such that
\begin{eqnarray}
\mathbb{D}\overline{C}(t)=\int_{0}^{m_F}C(m,t)dm
\end{eqnarray}
 with norm 
\begin{eqnarray}
\label{nrmC}
||C||=e^{-\lambda t}\max_{0\leq t\leq\tau}|C|
\end{eqnarray}
where $\lambda>0$.\\
We want to show that $\mathbb{D}$ is a contractive operator. First of all we observe that by definition of the norm \eqref{nrmC} for any $ s \in[0,\tau]$ we have
\begin{displaymath}
|C_1(s)-C_2(s)|\leq e^{\lambda s} ||C_1-C_2||.
	\end{displaymath}
Since, by their structure, $H$ and its derivative $H_C$ are bounded from above by two positive constants $\tilde k_1$ and $\tilde k_2$ respectively, we get 
\begin{eqnarray*}
\Big| \exp\int_{r}^{t}\left(H(\pi_{t-s}m,\overline{C_1}(s)ds\right)-\exp\int_{r}^{t}\left(H(\pi_{t-s}m,\overline{C_2}(s)ds\right) \Big| \\
\leq e^{\tilde k_{1}(t-r)}\Big| \int_{r}^{t}H(\pi_{t-s}m,\overline{C_1}(s))ds-\int_{r}^{t}H(\pi_{t-s}m,\overline{C_2}(s))ds \Big|\\
\leq e^{\tilde k_{1}t}\int_{r}^{t}k_2 \big| C_1(s)-C_2(s) \big| ds \leq  e^{\tilde k_{1}t}\int_{r}^{t}k_2 e^{\lambda s}  ||C_1-C_2||ds\\
\leq\frac{1}{\lambda} \tilde k_2 e^{\tilde k_{1}t} e^{\lambda t} ||C_1-C_2||.
\end{eqnarray*}
 Due to the initial hypothesis of continuity an positiveness made on $\alpha$ and on the previous results on $P$, both $F_C(m,t)$ and $C(m,0)$ are bounded,

$$|F_C(m,t)|<\tilde k_3 \qquad |C(m,0)|<\tilde k_4.$$\\	
So it follows that for any $r \in[0,t]$
\begin{eqnarray*}
||\mathbb{D}C_1(t)-\mathbb{D}C_2(t)||\!\!\!\!\!&\leq&\!\!\!\!\! \frac{1}{\lambda} \tilde k_2 e^{\tilde k_{1}t} e^{\lambda t} ||C_1-C_2||\left(\int_{0}^{m_F}\!\!\!\int_{0}^{t}\tilde k_3dr dm+\int_{0}^{m_F}\!\!\!\tilde k_4 dm\right)\\
\!\!\!\!\!&\leq& \!\!\!\!\!\frac{1}{\lambda} \tilde k_2 e^{\tilde k_{1}t} e^{\lambda t} m_{F}\left(\tilde k_{3}t+\tilde k_4\right)||C_1-C_2||. 
\end{eqnarray*}
If we choose $\lambda$ sufficiently large, the operator $\mathbb{D}$ is contractive so it admits a unique fixed point $\overline{C}$ such that $\mathbb{D}\overline{C}=\overline{C}$. Consequently, for any given initial condition, the equation \eqref{eqC} has exactly one solution for any $t\in[0,\tau]$. Then by method of steps is possible to iterate the proof for any $ t \geq 0$. Finally we have proved the following global existence result. 
\begin{teo}[Global existence and uniqueness]
Let us assume that the hypotheses $\bf{A1}$- $\bf{A4}$ hold, then the system \eqref{eqp}-\eqref{inc} with the initial conditions \eqref{inp}-\eqref{inc} and the boundary conditions \eqref{bcp}-\eqref{bcc}, has a unique global solution for any $t \geq 0$.
\end{teo}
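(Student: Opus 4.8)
\textit{Proof proposal.} The plan is to recast the three coupled transport equations \eqref{eqp}--\eqref{eqc} as a system of integral (fixed-point) equations for the total densities $N$, $P$ and $C$, and then to solve them by the Banach contraction principle with respect to the Bielecki-type weighted norm $\|X\|=\max_{0\le t\le\tau}e^{-\lambda t}|X|$ already introduced in \eqref{nrmN} and \eqref{nrmC}. First I would make precise the characteristic flow $s\mapsto\pi_s(m)$: by assumption \textbf{A3} the maturation speeds $u,v$ are $C^1$ on $[0,m_F]$ and vanish at the endpoints, so $0$ and $m_F$ are equilibria of $m'(s)=v(m)$ (resp. $u(m)$) and the characteristic through any $m\in(0,m_F)$ remains in $(0,m_F)$ for all $s$; this guarantees that the shifted arguments $\pi_{-t}(m)$, $\pi_{-\overline{\tau}}(m)$ and $g^{-1}(\pi_s(m))$ appearing in \eqref{solp}, \eqref{Nsol} and \eqref{solc} are well defined. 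Integrating each equation over its age interval and applying the variation of constants formula along characteristics then yields the integral representations \eqref{Nsol}, \eqref{Psol} and \eqref{Csol}.

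Next I would exploit the fact that the coupling is triangular: $N$ drives $P$, and $(N,P)$ drive $C$. The equation \eqref{eqN} for $N$ becomes self-contained once the solution formula \eqref{solp} for $p$ is substituted (its boundary flux is expressed through $N$ itself), so I would solve it first. On $C[0,\tau]$ the operator $\mathbb{B}$ is a contraction because $G$ and $G_N$ are bounded by structural constants $k_1,k_2$ (continuity and positivity of $\delta,\beta$ and the $C^1$-regularity of $v$), while \textbf{A2} makes both the boundary datum $F_N$ and the initial datum bounded; in the weighted norm the factor $e^{-\lambda t}$ cancels the exponential growth produced by $G$ and leaves a Lipschitz constant proportional to $1/\lambda$, which is $<1$ once $\lambda$ is chosen large enough. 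The fixed point gives a unique $N$ on $[0,\tau]$. With $N$ in hand, the same argument applied to \eqref{eqP} produces a unique $P$ (indeed \eqref{eqP} is linear in $P$ given $N$), and then \eqref{eqC} produces a unique $C$ via the operator $\mathbb{D}$.

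The delicate point is the Gompertzian equation: its forcing $F_C$ contains the factor $\int_{\underline{\tau}}^{\overline{\tau}}c\log c\,da$ together with the exponential of the explicit Gompertz profile \eqref{solc}. I would control these using assumption \eqref{A5}, $c(0,m,a)<1$, which makes $\ln\Omega<0$; combined with the boundedness of $P$ obtained in the previous step and the positivity of $\sigma$, this keeps the explicit solution \eqref{solc} bounded and the map $c\mapsto c\log c$ Lipschitz on the relevant range, so that $F_C$ and its derivative are bounded by constants $\tilde k_3,\tilde k_4$ and $\mathbb{D}$ is contractive for large $\lambda$. I expect this verification — that the logarithmic nonlinearity stays in a region where it is Lipschitz and does not drive $c$ out of $(0,1)$ or toward a singular value — to be the main obstacle, since every other estimate is a routine Gronwall/contraction computation.

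Finally, since all the structural constants $k_i,\tilde k_i$ depend only on the hypotheses \textbf{A1}--\textbf{A4} and not on the length of the time step, the local solution on $[0,\tau]$ can be continued by the method of steps: using the value at $t=\tau$ as new initial data (and noting that the delayed boundary term $p(t,(g^{-1})(m),\tau)$ is already known from the preceding interval) one repeats the three contractions on $[\tau,2\tau]$, then on $[2\tau,3\tau]$, and so on, thereby obtaining a unique solution for every $t\ge0$.
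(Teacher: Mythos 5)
Your proposal follows essentially the same route as the paper: integral reformulation along characteristics, Banach fixed point in the Bielecki weighted norm $e^{-\lambda t}\max|\cdot|$ with $\lambda$ chosen large, solving $N$, then $P$, then $C$ in triangular order, and continuation by the method of steps. If anything, you are more explicit than the paper about the one genuinely delicate point --- keeping $c$ in a range where $c\mapsto c\log c$ is Lipschitz so that the bounds $\tilde k_3,\tilde k_4$ on $F_C$ actually hold --- which the paper passes over by simply asserting boundedness from the positivity of $\alpha$ and the previous results on $P$.
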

\section{Stability results and asymptotic behavior}
This section is devoted to prove the stability of solutions for the system \eqref{eqp}-\eqref{inc}. In particular we will prove the local  and global exponential stability of the trivial solution $(N,P,C)\equiv(0,0,0)$ by using an iterative procedure.\\
Let us consider the domain $D=\left([0,\overline{\tau}]\times[0,g(1)]\right)$.\\
We denote by $z^{\phi}$ the generic solution $N,P,C$ with initial condition $\phi$.\\
For completeness, we list below the main definitions of stability we are going to use in this section.
\begin{itemize}
\item[\bf{Def 1:}] The solution $\overline{z}$ with initial condition $\overline{\phi}\in D$ is \emph{locally stable} if for all $\epsilon>0$ exists $k_{\epsilon}>0$ such that if $\phi\in D$ and $||\phi-\overline{\phi}||<k_{\epsilon}$ than $|z^{\phi}-\overline{z}|<\epsilon$ for all $(t,m)\in\left([\overline{\tau},\infty)\times[0,g(1)]\right)$.
\item[\bf{Def 2:}] The solution $\overline{z}$ with initial condition $\overline{\phi}\in D$ is \emph{locally exponentially stable} if for all $\epsilon>0$ exist $c,d>0$ such that if $\phi\in D$ and $||\phi-\overline{\phi}||<\epsilon$ then $|z^{\phi}-\overline{z}|<ce^{-d(t-\overline{\tau})}$ for all $(t,m)\in\left([\overline{\tau},\infty)\times[0,g(1)]\right)$.
\item[\bf{Def 3:}] A solution $\overline{z}$ related to the initial condition $\overline{\phi}$ is globally exponentially stable on $D$ if for all $\phi\in D$ there exists positive constants $c>0$ and $d>0$ such that
\begin{displaymath}
\lim_{t\rightarrow\infty}|z^{\phi}-\overline{z}|\leq ce^{-d(t-\overline{\tau})} 
\end{displaymath}
for all $(t,m)\in\left([\overline{\tau},\infty)\times[0,g(1)]\right).$
\end{itemize}
Through this section we assume that:
\begin{itemize}
\item [\bf{B1.}] The map $x\rightarrow x\beta(m,x)$ is positive and Lipschitz continuous in a neighborhood of zero, that is, there exists $\epsilon>0$, and $k_b>0$ such that 
\begin{eqnarray*}
|x\beta(m,x)-y\beta(m,y)|\leq k_b|x-y|
\end{eqnarray*}
for every $|x|\leq \epsilon$ and $|y|\leq \epsilon$.
\item [\bf{B2.}] The map $x \rightarrow \alpha(m,x)$ is Lipschitz and such that $\alpha(0)\!\!=\!\!0$. 
As a consequence the map $x\rightarrow x\alpha(m,x)$ is positive and Lipschitz continuous in a neighborhood of zero, namely there exists $\epsilon>0$, and $k_a>0$ such that 
\begin{eqnarray*}
|x\alpha(m,x)-y\alpha(m,y)|\leq k_a|x-y|
\end{eqnarray*}
for every $|x|\leq \epsilon$ and $|y|\leq \epsilon$.
\end{itemize}

\subsection{Local and global stability for N-cells} In this section we will rewrite the solution \eqref{Nsol} as
	\begin{eqnarray}
	N(t,m)=\begin{cases} N_0(t,m)+G(t,m)+J(t,m) & \textrm{if $t\geq\overline{\tau}$}\\
	\phi(t,m) & \textrm{if $t\in[0,\overline{\tau}]$}
	\end{cases}
	\end{eqnarray}
	where we set
	\begin{eqnarray*}
	N_0(t,m)\!\!\!&=&\!\!\!\phi \left(\overline{\tau},\pi_{-(t-\overline{\tau})}(m)\right)K(t-\overline{\tau},m)\\
	G(t,m)\!\!\!&=&\!\!\!\int_{\overline{\tau}}^{t}K(t-s,m)\beta\left[N(s,\pi_{-(t-s)}(m)),m\right]N(s,\pi_{-(t-s)}(m))ds\\
	J(t,m)\!\!\!&=& \!\!\!\int_{\overline{\tau}}^{t}K(t-s,m)f_2\left(\pi_{-(t-s)}(m),m\right)ds\\
	\!\!\!&\times&\!\!\! \beta\left[N(s-\overline{\tau},g^{-1}\left(\pi_{-(t-s)}(m)),m\right)\right]
	\!\!\times\!\! N\left(s-\overline{\tau},g^{-1}\left(\pi_{-(t-s)}(m)\right)\right)
	\end{eqnarray*}
and we are going to prove the following Theorem
\begin{teo}[Local stability]
\label{T1}
If we suppose $A=\frac{k_b(1+2\varsigma)}{I}<1$, then the trivial solution $N\equiv 0$ of \eqref{eqN} is locally exponentially stable in the sense of definition \bf{Def 2}. 
\end{teo}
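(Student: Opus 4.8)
The plan is to exploit the decomposition $N=N_0+G+J$ together with the closed form \eqref{Nsol} and to set up a Picard iteration $N^{(0)}=N_0$, $N^{(k+1)}=N_0+G(N^{(k)})+J(N^{(k)})$, showing that the hypothesis $A<1$ makes the map $\Phi:N\mapsto N_0+G(N)+J(N)$ a strict contraction on an exponentially weighted function space, whose unique fixed point is the solution of \eqref{eqN}. Since the trivial solution corresponds to $\overline{\phi}\equiv 0$, it suffices to prove that whenever $\|\phi\|<\epsilon$ is small, the resulting $N$ satisfies $|N(t,m)|\le c\,e^{-d(t-\overline{\tau})}$ on $[\overline{\tau},\infty)\times[0,g(1)]$, which is precisely \textbf{Def 2}.

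First I would record the elementary bounds that feed the constant $A$. Using that $\delta$ is positive and that $u,v$ together with their derivatives are continuous on the compact interval $[0,m_F]$ (hypotheses \textbf{A3}, \textbf{A4}), I would produce $I>0$ such that the kernel satisfies $K(t-s,m)\le e^{-I(t-s)}$ uniformly in $m$; this is the quantity in the denominator of $A$, and $\int_{\overline{\tau}}^{t}e^{-I(t-s)}\,ds\le 1/I$ is what generates the factor $1/I$. Continuity of $(g^{-1})'$ and of $\varsigma(\cdot,\cdot)$ on the compact domain, via \textbf{A2}, gives the uniform bound $|f_2|\le 2\varsigma$, explaining the coefficient $2\varsigma$ carried by $J$. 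The crucial step is then the linearization through \textbf{B1}: since $|x\beta(m,x)-y\beta(m,y)|\le k_b|x-y|$ (and in particular $|x\beta(m,x)|\le k_b|x|$ on taking $y=0$), near the origin the quadratic terms $G$ and $J$ are dominated linearly, with Lipschitz constants $k_b$ and $2\varsigma k_b$ respectively.

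With these bounds I would work in the space of continuous functions on $[\overline{\tau},\infty)\times[0,g(1)]$ equipped with the weighted norm $\|N\|_d=\sup_{t,m}e^{d(t-\overline{\tau})}|N(t,m)|$, and estimate, for $d\in[0,I)$,
\begin{equation*}
\|G(N_1)-G(N_2)\|_d\le \frac{k_b}{I-d}\,\|N_1-N_2\|_d,\qquad \|J(N_1)-J(N_2)\|_d\le \frac{2\varsigma k_b}{I-d}\,e^{d\overline{\tau}}\,\|N_1-N_2\|_d.
\end{equation*}
As $d\to 0^{+}$ the total contraction constant tends to $A=k_b(1+2\varsigma)/I<1$, so for $d$ small enough $\Phi$ is a strict contraction with some constant $A'<1$. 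The contraction mapping principle yields a unique fixed point with $\|N\|_d\le \|N_0\|_d/(1-A')$; since $|N_0(t,m)|\le \|\phi\|\,e^{-I(t-\overline{\tau})}$ gives $\|N_0\|_d\le\|\phi\|\le\epsilon$ for $d<I$, this delivers exactly $|N(t,m)|\le c\,e^{-d(t-\overline{\tau})}$ with $c=\epsilon/(1-A')$.

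The main obstacle lies in two interlocking points. First, the delay term $J$ evaluates $N$ at the shifted time $s-\overline{\tau}$: in the weighted norm this is what produces the factor $e^{d\overline{\tau}}$ above, and for $s-\overline{\tau}\in[0,\overline{\tau}]$ the value $N(s-\overline{\tau},\cdot)=\phi$ belongs to the data rather than to the unknown, so these contributions must be split off and absorbed into the inhomogeneous term $N_0$; choosing $d$ small keeps $e^{d\overline{\tau}}$ near $1$ so the contraction survives the passage from $A$ to $A'$. Second, the estimate $|x\beta(m,x)|\le k_b|x|$ holds only on the neighborhood $|x|\le\epsilon$ of \textbf{B1}, so the statement is genuinely local: one must verify a priori that every iterate stays inside this neighborhood, which follows by induction from the smallness of $\epsilon$ and the bound $\|N\|_d\le\epsilon/(1-A')$, thereby closing the bootstrap. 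A subsidiary technical issue is securing strict positivity of the decay rate $I$ of $K$ even though $v'$ changes sign on $(0,m_F)$; this rests on the positivity of $\delta$ dominating the bounded contribution of $v'$ along the characteristic flow, and is precisely where \textbf{A3}--\textbf{A4} are indispensable.
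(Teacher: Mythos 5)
Your proposal is correct and follows essentially the same route as the paper: the same decomposition $N=N_0+G+J$, the same kernel bound $K\le e^{-I(t-s)}$ and Lipschitz bound $k_b$ from \textbf{B1}, the same a priori invariance of the $\epsilon$-ball, and the same exponentially weighted iteration (your condition that the weighted contraction constant $k_b(1+2\varsigma e^{d\overline{\tau}})/(I-d)$ be less than $1$ is exactly the paper's choice of $p$ with $k_b<\frac{I-p}{1+2\varsigma e^{p\overline{\tau}}}$). The only difference is presentational: you package the argument as a Banach fixed-point theorem in the norm $\|\cdot\|_d$, whereas the paper runs the equivalent induction on the iterates $N_n$ and telescopes the differences $|N_{n+1}-N_n|$.
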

\begin{proof}
First of all we define the sequence $(N_n)_{n\in\mathbb{N}}$ as follows
\begin{displaymath}
N_0(t,m)=\begin{cases}
\phi(t,m) \qquad \textrm{if $t\in[0,\overline{\tau}]$}\\
\phi\left(\overline{\tau},\pi_{-(t-\overline{\tau})}(m)\right)k(t-\overline{\tau},m) \qquad \textrm{if $t>\overline{\tau}$}
		\end{cases}
\end{displaymath}

\begin{displaymath}
N_n(t,m)=\begin{cases}
\phi(t,m) & \textrm{if $t\in[0,\overline{\tau}]$}\\
N_0(t,m)+G(N_{n-1})(t,m)+J(N_{n-1})(t,m) &\textrm{if $t>\overline{\tau}$}.
		\end{cases}
\end{displaymath}
Recalling  the definitions of $\pi$ and $g$ we have that $\pi_{-(t-\overline{\tau})}(m)<m<g(1)$ and 
so 
\begin{displaymath}
\phi\left(\overline{\tau},\pi_{-(t-\overline{\tau})}(m)\right)<\epsilon,
\end{displaymath}
moreover
\begin{displaymath}
|K(t-\overline{\tau},m)|\leq e^{-I(t-\overline{\tau})}
\end{displaymath}
with $I=\inf_{m\in[0,g(1)]}(\delta(m)+v'(m))$, so for $n=0$ we get
\begin{displaymath}
|N_0(m,t)|\leq \epsilon e^{-I(t-\overline{\tau})}\leq \epsilon.
\end{displaymath}
If we suppose that $|N_n(m,t)|<\epsilon$ for $n>0$ and by the Lipschitzianity of $\beta$ we obtain the estimates
\begin{eqnarray*}
|G(N_n)(m,t)|&\leq& 2 \epsilon \varsigma k_b \int_{\overline{\tau}}^{t}e^{-I(t-s)}ds,\\
|J(N_n)(m,t)|&\leq& \epsilon  k_b \int_{\overline{\tau}}^{t} e^{-I(t-s)}ds.
\end{eqnarray*}
It follows that
\begin{eqnarray}
N_{n+1}(t,m)=N_0(t,m)+G(N_n)(t,m)+J(N_n)(t,m)
\end{eqnarray}
becomes
\begin{eqnarray*}
|N_{n+1}(m,t)|&\leq& \epsilon e^{-I(t-\overline{\tau})}+2\epsilon \varsigma k_b \int_{\overline{\tau}}^{t}e^{-I(t-s)}ds\\
&+&\epsilon  k_b \int_{\overline{\tau}}^{t}e^{-I(t-s)}ds\\
&\leq&\epsilon \left(e^{-I(t-\overline{\tau})}+\frac{k_b(1+2\varsigma)}{I}(1-e^{-I(t-\overline{\tau})})\right),
\end{eqnarray*}
since $A=\frac{k_b(1+2\varsigma)}{I}<1$, we have that
\begin{eqnarray*}
|N_{n+1}(m,t)|&\leq&\epsilon \left(e^{-I(t-\overline{\tau})}(1-A)+A\right)\\
&\leq& \epsilon \left((1-A)+A\right)\leq \epsilon.
\end{eqnarray*}
By the induction principle
\begin{eqnarray}
\label{prelN}
|N_n(m,t)|\leq\epsilon,
\end{eqnarray}
for all $n\in\mathbb{N}$.
This is a preliminary result to show the exponential stability of our solution.

Let us now define the continuous map $p\in[0,I]\rightarrow\frac{I-p}{1+2\varsigma e^{p\overline{\tau}}}$ such that\\
$k<\frac{I-p}{1+2\varsigma e^{p\overline{\tau}}}<\frac{I}{1+2\varsigma}$. By using \eqref{prelN} and  \textbf{B1} we can prove the following estimates
\begin{eqnarray}
\label{estN}
|N_0(m,t)|&\leq& \epsilon e^{-I(t-\overline{\tau})} \leq \epsilon e^{-p(t-\overline{\tau})} \leq \epsilon\\
\label{estJ}
|J_0(N_0)(m,t)|&\leq& k_b\int_{\overline{\tau}}^{t} |N_0(\pi_{-(t-s)(m)},s)|e^{-I(t-s)}ds \nonumber \\
&\leq&  k_b\int_{\overline{\tau}}^{t} e^{-p(t-\overline{\tau})} e^{-I(t-s)}ds\nonumber\\
&\leq& k_b e^{-It} e^{p\overline{\tau}}\int_{\overline{\tau}}^{t} e^{-(I-p)s}ds\\
\label{estG}
|G_0(N_0)(m,t)|&\leq& 2\varsigma \epsilon k_b e^{-It} e^{2pt} \int_{\overline{\tau}}^{t} e^{-(I-p)s}ds.
\end{eqnarray}
By using \eqref{estN},\eqref{estJ} and \eqref{estG} we get,
\begin{eqnarray*}
|N_1(m,t)-N_0(m,t)|\!\!\!&=&\!\!\!|N_0(m,t)+J(N_0)(m,t)+G_0(m,t)-N_0(m,t)|\\
\!\!\!&=&\!\!\!|J(N_0)(m,t)+G_0(m,t)|\\
\!\!\!&\leq&\!\!\! k_b e^{-It} e^{p\overline{\tau}}\int_{\overline{\tau}}^{t} \!\!\!e^{-(I-p)s}ds+2\varsigma \epsilon k_b e^{-It} e^{2pt} \int_{\overline{\tau}}^{t}\!\!\! e^{-(I-p)s}ds\\
\!\!\!&\leq&\!\!\!k_b e^{-It} e^{p\overline{\tau}}[1+2\varsigma e^{pt}]\int_{\overline{\tau}}^{t}\!\!\! e^{-(I-p)s}ds.
\end{eqnarray*}
Solving the integral $e^{-It} e^{p\overline{\tau}}\int_{\overline{\tau}}^{t} e^{-(I-p)s}ds$ it is possible to complete the estimate as follows
\begin{eqnarray}
|N_1(m,t)-N_0(m,t)| &\leq& \epsilon k_b[1+2\varsigma e^{pt}]\frac{1}{I-p}e^{-p(t-\overline{\tau})}\nonumber \\
&\leq& \epsilon k_b e^{-p(t-\overline{\tau})}.
\end{eqnarray}
By induction and with the same procedure as before we are able to prove that for any $n \in \mathbb{N}$,
\begin{eqnarray}
|N_{n+1}(m,t)-N_{n}(m,t)| \leq \epsilon (k_b)^{n+1} e^{-p(t-\overline{\tau})}.
\end{eqnarray}
\end{proof}

From this result it is now possible to extend our local stability result to a global stability one.
\begin{teo}[Global exponential stability]
\label{T2}
If the hypothesis \textbf{B1} holds, with $k_b$ such that $\frac{k_b(1+2\varsigma)}{I}<1$, then the trivial solution $N\equiv 0$ of \eqref{eqN} is globally exponentially stable in the sense of definition \bf{Def 3}.
\end{teo}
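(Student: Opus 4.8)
The plan is to upgrade the local statement of Theorem~\ref{T1} to a global one in two moves: first pass to the limit in the iteration $(N_n)_{n\in\mathbb{N}}$ to obtain the exponential decay of the actual solution, and then dispose of the smallness restriction on the initial datum. Theorem~\ref{T1} furnishes, for $\|\phi\|$ small, the bounds $|N_0(m,t)|\leq \epsilon e^{-p(t-\overline{\tau})}$ and $|N_{n+1}(m,t)-N_n(m,t)|\leq \epsilon\, q^{\,n+1} e^{-p(t-\overline{\tau})}$, where $q$ is the contraction factor assembled from $k_b$, $\varsigma$ and $I$ and is strictly smaller than $1$ precisely because $A<1$. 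First I would use these to show that $(N_n)$ is Cauchy for the weighted norm \eqref{nrmN}: for $M>n$ the telescoping and triangle inequalities give $|N_M-N_n|\leq \epsilon e^{-p(t-\overline{\tau})}\sum_{j=n}^{M-1} q^{\,j+1}$, whose right-hand side is a geometric tail tending to $0$. By completeness the sequence converges uniformly on $[\overline{\tau},\infty)\times[0,g(1)]$ to a limit $N$, which is the unique solution of \eqref{eqN} delivered by the global existence theorem of Section~4.

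Summing the same bounds along the telescoping series $N=N_0+\sum_{n\geq 0}(N_{n+1}-N_n)$ then yields
\begin{eqnarray*}
|N(t,m)|\leq \epsilon e^{-p(t-\overline{\tau})}\Big(1+\sum_{n\geq 0} q^{\,n+1}\Big)=\frac{\epsilon}{1-q}\,e^{-p(t-\overline{\tau})},
\end{eqnarray*}
so that with $c=\epsilon/(1-q)$ and $d=p$ one has $|N(t,m)|\leq c\,e^{-d(t-\overline{\tau})}$ for all $(t,m)\in[\overline{\tau},\infty)\times[0,g(1)]$; letting $t\to\infty$ produces exactly the estimate demanded by \textbf{Def 3}. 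This settles the conclusion for every initial datum that already lies in the $\epsilon$-neighborhood of the origin on which \textbf{B1} is effective.

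The genuinely new difficulty, which I expect to be the main obstacle, is that \textbf{Def 3} requires the estimate for \emph{every} $\phi\in D$, whereas \textbf{B1} only provides the Lipschitz control of $x\mapsto x\beta(m,x)$ for $|x|\leq\epsilon$, so the nonlinear terms $G$ and $J$ cannot be bounded directly for large data. To bridge this I would exploit the sign of the linear part: since $G(m,\overline{N})=-[\delta(m)+\beta(m,\overline{N})+v'(m)]\leq -I$ with $I=\inf_{m}(\delta(m)+v'(m))>0$ and $\beta\geq 0$, the kernel satisfies $K(t-\overline{\tau},m)\leq e^{-I(t-\overline{\tau})}$ regardless of the size of $\phi$. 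A Gronwall--comparison argument on \eqref{eqN} transported along the characteristics $\pi$ should then keep any solution issued from an arbitrary $\phi\in D$ bounded (in agreement with Section~4) and drive it into the ball $\{|N|<\epsilon\}$ after a finite time $T_\epsilon=T_\epsilon(\|\phi\|)$. From $t\geq T_\epsilon$ the analysis of the two preceding paragraphs applies verbatim with $\overline{\tau}$ replaced by $T_\epsilon$, giving decay at rate $d=p$, while the bounded transient on $[\overline{\tau},T_\epsilon]$ is absorbed into a larger constant $c$, yielding $\lim_{t\to\infty}|N^{\phi}|\leq c\,e^{-d(t-\overline{\tau})}$. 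Making this finite-time entry quantitative, and verifying that the comparison step relies only on the global sign condition $G\leq -I$ and not on the local Lipschitz bound, is the delicate point on which the whole globalization turns.
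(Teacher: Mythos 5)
Your first two paragraphs are essentially the paper's own argument, carried out more carefully than the paper does it: the printed proof of Theorem \ref{T2} consists only of the assertion $|N^{\phi}(m,t)|\le\|\phi\|$ followed by ``using the same arguments of Theorem \ref{T1}'' and the limit $\lim_{t\to\infty}\epsilon(k_b)^{n+1}e^{-p(t-\overline{\tau})}=0$. The telescoping sum and the passage to the limit $n\to\infty$ that you spell out, giving $|N(t,m)|\le\frac{\epsilon}{1-q}\,e^{-p(t-\overline{\tau})}$, is exactly the step the paper leaves implicit, and it is the correct way to convert the iteration bounds of Theorem \ref{T1} into decay of the actual solution. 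One caveat you inherit from the paper: geometric summability needs the contraction factor $q$ (which in the paper's notation is $k_b$ itself) to be strictly less than $1$, and this follows from $A=k_b(1+2\varsigma)/I<1$ only when $(1+2\varsigma)/I\ge 1$; neither you nor the paper checks this, but it is consistent with the intended regime.

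The genuine divergence is your third paragraph, and there the proposal has a real gap. You are right that \textbf{Def 3} demands the estimate for every $\phi\in D$ while \textbf{B1} is only a local Lipschitz hypothesis; the paper simply does not confront this, and in substance proves global-in-time decay for small data rather than stability for arbitrary data in $D$. Your proposed repair, however, does not close the gap as stated: the bound $K(t-\overline{\tau},m)\le e^{-I(t-\overline{\tau})}$ controls only the homogeneous part of the Duhamel representation, while the terms $G$ and $J$ feed $\beta(m,N)N$ back into the solution, and \textbf{B1} gives no control on $x\beta(m,x)$ for $|x|>\epsilon$. A Gronwall or comparison argument therefore cannot conclude finite-time entry into the ball $\{|N|<\epsilon\}$ without an additional global structural assumption (for instance boundedness of $\beta$ with $\sup\beta$ small relative to $I$, or sublinearity of $x\beta(m,x)$). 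So the ``delicate point'' you flag is indeed where the globalization turns; it is unresolved in the paper as well, but your sketch does not resolve it either.
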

\begin{proof}
It follows from the local stability that
\begin{eqnarray*}
|N^{\phi}(m,t)|\leq||\phi||.
\end{eqnarray*}
So, by using the same arguments of Theorem \ref{T1} we have
\begin{eqnarray}
\lim_{t\rightarrow\infty}|N^{\phi}|=\lim_{t\rightarrow\infty}\epsilon (k_b)^{n+1} e^{-p(t-\overline{\tau})}=0,
\end{eqnarray}
 for all $(t,m)\in\left([\overline{\tau},\infty)\times[0,g(1)]\right)$.
\end{proof}

\subsection{Local and global stability of P-cells}
Let us recall that the solution for $P$ with initial datum $P(t,m)=\psi(m,t)$ where $\psi\in C\left([\underline{\tau},\overline{\tau}]\times[0,g(1)]\right)$ is
\begin{displaymath}
P(t,m)\!=\!e^{-(v'(m)-\gamma(m))t}\!\!\left(\psi(m,t)\!+\!\int_{0}^{t}\!\!\!e^{(v'(m)-\gamma(m))s}\!\!\int_{0}^{m}\!\!\!F((N(s,z),z)dzds\!\!\right)
\end{displaymath}
where 
\begin{eqnarray*}
&F&\!\!\!\!\!\left(N(m,t),m\right)=\beta\left(N(m,t),m\right)N(m,t)\\
&+&\!\!\!\!\!\begin{cases}
	\!\varsigma(m,t) \Gamma\left(\pi_{-t}(m);a-t\right) & \textrm{if $a>t$}\\
	\!\varsigma(m,t)N_{n}\left(\pi_{-a}(m), t-a \right)\beta\!\left(N_{n}\left(\pi_{-a}(m), t-a\right),\pi_{-a}(m)\right)  & \textrm{if $a<t$}
	\end{cases}
\end{eqnarray*}
by using the same procedure of the previous section we define the sequence $(P_n)_{n\in \mathbb{N}}$ as
\begin{eqnarray}
P_n(t,m)&=&\psi(m,t)\Big[e^{-(v'(m)-\gamma(m))t}\nonumber\\
&+&\int_{0}^{t}e^{-(v'(m)-\gamma(m))(t-s)}\int_{0}^{m}F(N_{n}(z,s),z)dzds\Big]
\label{Pn}
\end{eqnarray}
where $N_{n}$ is his in the previous section and 
\begin{eqnarray*}
&F&\!\!\!\!\!\!\left(N_{n}(m,t),m\right)\\
&=&\!\!\begin{cases}
	\!\varsigma(m,t) \Gamma\left(\pi_{-t}(m);a-t\right) & \textrm{if $a>t$}\\
	\!\varsigma(m,t)N_{n}\left(\pi_{-a}(m), t-a \right)\beta\!\left(N_{n}\left(\pi_{-a}(m), t-a\right),\pi_{-a}(m)\right) & \textrm{if $a<t$}
	\end{cases}
\end{eqnarray*}
First of all we estimate the sequence \eqref{Pn} in order to prove a preliminary result of invariance analogous to \eqref{prelN} for $(P_n)_{n\in \mathbb{N}}$ where, as before, we set the initial condition $\psi(m,t)$ such that
 $$\psi(m,t)<\epsilon.$$ \\
By defining $E=\inf{(v'(m)-\gamma(m))}>0$ from \eqref{Pn} we get that
\begin{eqnarray}
\label{prelP}
|P_n(m,t)|\leq\epsilon \left(e^{-Et}+ \tau e^{-E\tau}k_b \right) \leq \overline{K}\epsilon.
\end{eqnarray}
Now, in order to show the stability for $P$, we need to recover some estimates for $|P_{n+1}-P_n|$. For $a>t$ we have
\begin{eqnarray}
\!\!\!|P_{n+1} - P_{n}|=0.
\end{eqnarray}
While for $a<t$ we have
\begin{eqnarray}
|\varsigma(m,t)\beta\left(N_{n+1}(m,t),m\right)N_{n+1}(m,t)\!\!\!\!\!&-&\!\!\!\!\!\varsigma(m,t)\beta\left(N_{n}(m,t),m\right)N_{n}(m,t)|\nonumber\\
\!\!\!\!\!&\leq&\!\!\!\!\! \sup|\varsigma(m,t)|k_b(N_{n+1}-N_{n}).
\end{eqnarray}
Now in both cases, by the global exponential stability proved for  $N$, in Theorem \ref{T2} is possible to conclude the convergence 
\begin{eqnarray}
|P_{n+1}-P_{n}|\rightarrow 0 \quad \textrm{as $n\rightarrow \infty$},
\end{eqnarray}
i.e. the global exponential stability of \emph{P}. Hence we proved the following Theorem.
\begin{teo}[Global exponential stability]
\label{T3}
If the hypothesis \textbf{B1} holds, with $k_b$ such that $\frac{k_b(1+2\varsigma)}{I}<1$, then the trivial solution $P\equiv 0$ of \eqref{Psol} is globally exponentially stable in the sense of definition \bf{Def 3}.
\end{teo}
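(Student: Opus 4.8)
The plan is to exploit the fact that the dynamics of $P$ is \emph{slaved} to that of the stem cells through the source term appearing in the variation of constants representation \eqref{Psol}: in the iteration \eqref{Pn} the proliferating population enters only through the coupling $\varsigma(m,t)N_n\beta(N_n,m)$ and through the $N$-independent transient term carrying the initial age profile $\Gamma$. Accordingly, I would run the Picard-type iteration \eqref{Pn} driven by the stem-cell iterates $(N_n)_{n\in\mathbb{N}}$ already constructed in Theorem \ref{T2}, and then transfer the exponential decay of $N$ to $P$ in three steps: (i) a uniform invariance bound for the iterates $(P_n)$, (ii) a geometric estimate for the consecutive differences $|P_{n+1}-P_n|$, and (iii) passage to the limit together with the extraction of a genuine exponential rate.

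For step (i) I would set $E=\inf_m(v'(m)-\gamma(m))>0$, so that the prefactor $e^{-(v'(m)-\gamma(m))t}$ in \eqref{Pn} is dominated by $e^{-Et}$. Combining this decay with the Lipschitz bound $k_b$ from hypothesis \textbf{B1} and the invariance $|N_n|\leq\epsilon$ already proved in \eqref{prelN}, a direct majorization of \eqref{Pn} yields the preliminary bound \eqref{prelP}, namely $|P_n(m,t)|\leq\overline{K}\epsilon$ uniformly in $n$. Here I would note that for $t\geq\overline{\tau}$ the characteristic branch $a>t$ is empty (ages lie in $[0,\overline{\tau}]$), so the $N$-independent $\Gamma$ term contributes only a bounded transient for small times and disappears from the asymptotics.

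For step (ii) I would estimate the differences directly from \eqref{Pn}. On the branch $a>t$ the source is exactly the $N$-independent $\Gamma$ term, hence $P_{n+1}=P_n$ there and the difference vanishes. On the branch $a<t$ the source is $\varsigma(m,t)N_n\beta(N_n,m)$, and hypothesis \textbf{B1} gives
\begin{displaymath}
\left|\varsigma\beta(N_{n+1},m)N_{n+1}-\varsigma\beta(N_n,m)N_n\right|\leq \sup|\varsigma|\,k_b\,|N_{n+1}-N_n|.
\end{displaymath}
Inserting the iterate estimate $|N_{n+1}-N_n|\leq\epsilon\,(k_b)^{n+1}e^{-p(t-\overline{\tau})}$ from the proof of Theorem \ref{T1} and evaluating the convolution $\int_{\overline{\tau}}^{t}e^{-E(t-s)}e^{-p(s-\overline{\tau})}ds$, I would obtain a bound of the form $|P_{n+1}-P_n|\leq c\,(k_b)^{n+1}e^{-d(t-\overline{\tau})}$ with $d=\min(E,p)>0$.

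The main obstacle is precisely the rate bookkeeping in steps (ii) and (iii): one must check that the convolution of the exponential kernel $e^{-E(t-s)}$ against the decaying stem-cell source does \emph{not} merely reproduce boundedness but yields a strict exponential rate $d>0$, which forces a careful comparison of the two rates $E$ and $p$ (the degenerate case $E=p$ producing a polynomial factor that must still be absorbed into a slightly smaller exponent). Once this is secured, summing the geometric series $\sum_n (k_b)^{n+1}$, convergent under the smallness hypothesis $\frac{k_b(1+2\varsigma)}{I}<1$, shows that $(P_n)$ is Cauchy and converges uniformly to the solution $P$, with $\lim_{t\to\infty}|P^{\psi}|\leq c\,e^{-d(t-\overline{\tau})}$ for all $(t,m)\in[\overline{\tau},\infty)\times[0,g(1)]$, which is exactly the global exponential stability of the trivial solution in the sense of \textbf{Def 3}.
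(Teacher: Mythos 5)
Your proposal follows essentially the same route as the paper: the iteration \eqref{Pn} driven by the stem-cell iterates $N_n$, the invariance bound \eqref{prelP} via $E=\inf_m(v'(m)-\gamma(m))>0$, the vanishing of the difference on the branch $a>t$, the Lipschitz estimate $\sup|\varsigma|\,k_b\,|N_{n+1}-N_n|$ on the branch $a<t$ from \textbf{B1}, and the conclusion from the decay of $N$ established in Theorems \ref{T1}--\ref{T2}. You are in fact more explicit than the paper in steps (ii)--(iii) (the convolution of $e^{-E(t-s)}$ against $e^{-p(s-\overline{\tau})}$ and the geometric summation in $n$), where the paper simply asserts $|P_{n+1}-P_n|\rightarrow 0$ and reads off the exponential stability.
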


\subsection{Local and global stability of C-cells}
Finally we apply the same procedure to the last group of cells. Let us recall that the complete solution for  $C$ with initial datum $\Omega(a,m)$ such that $\Omega \in C\left([\underline{\tau},\overline{\tau}]\times[0,g(1)]\right)$ is given by
\begin{eqnarray}
C(m,t)&=&\Omega(a,m)\Big[e^{-\int_{0}^{t}(u'(m)-\alpha(P(m,s),m))ds}\nonumber\\
&+&\int_{0}^{t}e^{\int_{t}^{s}(u'(m)-\alpha(P(m,r),m))dr}\int_{0}^{m}F(P(s,z),z)dzds\Big]
\end{eqnarray}
 where
\begin{displaymath}
F((P(m,t),m)= \alpha(P(m,t),m)P(m,t)-F_{C}
\end{displaymath}
while
\begin{eqnarray*}
F_{C}&=&\sigma(m)\int_{\underline{\tau}}^{\overline{\tau}}c\log(c)da\\  
&+&exp\left(\frac{\widetilde{\alpha}(P(m,t),m)}{\sigma(m)}-\left(\frac{\widetilde{\alpha}(P(t,m),m)}{\sigma(m)}-\ln(\Omega(m,a))\right)e^{-\sigma(m)t}\right)
\end{eqnarray*}
and $\tilde \alpha$ defined as in \eqref{alpha}.

Let us now define the sequence  $(C_n)_{n\in \mathbb{N}}$ as follows
\begin{eqnarray}
C_{n}(m,t)&=&\Omega(a,m)\Big[e^{-\int_{0}^{t}(u'(m)-\alpha(P_{n}(m,s),m))ds}\nonumber\\
&+&\int_{0}^{t}e^{\int_{t}^{s}(u'(m)-\alpha(P_{n}(m,r),m))dr}\int_{0}^{m}F(P_{n}(s,z),z)dzds\Big]
\end{eqnarray}
 where
\begin{displaymath}
F((P_{n}(m,t),m)= \alpha(P_{n}(m,t),m)P_{n}(m,t)-F_{C_n}
\end{displaymath}
while
\begin{eqnarray*}
F_{C_n}&=&\sigma(m)\int_{\underline{\tau}}^{\overline{\tau}}c\log(c)da\\  
&+&\exp\left(\frac{\widetilde{\alpha}(P_{n}(m,t),m)}{\sigma(m)}-\left(\frac{\widetilde{\alpha}(P_{n}(t,m),m)}{\sigma(m)}-\ln(\Omega(a,m))\right)e^{-\sigma(m)t}\right)
\end{eqnarray*}
First of all, we recover a preliminary result analogous to \eqref{prelN} and \eqref{prelP} by estimating $|C_n(m,t)|$.\\
By the definitions of $\pi$ and $g$  and hypothesis \textbf{A3}, we have that
 $\pi_{-(t-\overline{\tau})}(m)<m<g(1)$ so 
\begin{eqnarray}
\Omega\left(\overline{\tau},\pi_{-(t-\overline{\tau})}(m)\right)<\epsilon.
\end{eqnarray}
By hypothesis \textbf{A3} it is possible to define $A=\inf{\left\{e^{-\int_{0}^{t}(u'(m)dt}\right\}}>0$, moreover by recalling the hypothesis \textbf{B2} and the preliminary result \eqref{prelP}
\begin{eqnarray}
\label{1}
e^{-\int_{0}^{t}(u'(m)-\alpha(P_{n}(m,s),m))ds}&\leq& e^{-At} e^{\int_{0}^{\overline{\tau}}|\alpha(P_{n}(m,s),m)|ds}\nonumber\\
&\leq& A e^{k_{a} |P_{n}(m,s)|\overline{\tau}} \leq A e^{k_{a} \epsilon \overline{\tau}}.
 \end{eqnarray}
Furthermore, in order to estimate $|F_{C_n}|$, we note that by hypotheses \textbf{A4} and \eqref{A5}, with $c$ as in \eqref{solc} and computed in $C_{n}$
\begin{eqnarray*} 
\Bigg|\sigma(m)\int_{\underline{\tau}}^{\overline{\tau}}c\log(c)da \Bigg|<<1.
\end{eqnarray*}
Moreover by hypothesis \eqref{A5} and $\bf{B2}$ and by using \eqref{prelP}
\begin{eqnarray*}
\Bigg|e^{\frac{\widetilde{\alpha}(P_n(m,t),m)}{\sigma(m)}}e^{-\frac{\widetilde{\alpha}(P_n(t,m),m)}{\sigma(m)}e^{-\sigma(m)t}}e^{\ln(\Omega(a,m))e^{-\sigma(m)t}}\Bigg|\\
\leq \Big|e^{\frac{\widetilde{\alpha}(P_n(m,t),m)}{\sigma(m)}}\Big|\Big|e^{-\frac{\widetilde{\alpha}(P_n(t,m),m)}{\sigma(m)}}\Big|\Big|e^{\ln(\Omega(a,m))}\Big|\leq \Big|e^{\ln(\Omega(a,m))}\Big| <1.
\end{eqnarray*}
So we recover the estimate 
\begin{eqnarray}
|F_{C_n}|<1
\end{eqnarray}
and consequently by using again \eqref{prelP} we have
\begin{eqnarray}
\label{2}
F((P_{n}(m,t),m)\leq k\epsilon-1\leq k\epsilon.
\end{eqnarray}
As a consequence  we get the  estimate 
\begin{eqnarray}
\label{FP}
\Big|\int_{0}^{m}F(P_{n}(s,z),z)dz\Big|&\leq& \int_{0}^{g(1)}|F(P_{n}(s,z),z)|dz \nonumber\\
&\leq& g(1)|F(P_{n}(s,z),z)|\leq g(1)k\epsilon
\end{eqnarray}
Finally, by \eqref{1}, \eqref{2} and \eqref{FP} we obtain for any $n\geq 0$
\begin{eqnarray}
|C_n(m,t)|&\leq& \epsilon A e^{k_{a} \epsilon \overline{\tau}}+ \int_{0}^{\overline{\tau}}A e^{k_{a} \epsilon\overline{\tau} }g(1)k\epsilon dt\nonumber \\
&\leq&\epsilon A e^{k_{a} \epsilon \overline{\tau}} (1+g(1)\overline{\tau})\\
&\leq& \overline{K} \epsilon.
\end{eqnarray}
Let us now prove the convergence of the sequence $C_{n}$ by estimating
\begin{eqnarray}
\label{57}
|C_{n+1}\!\!\!\!&-&\!\!\!\!C_{n}|\leq e^{\int_{0}^{t}u'(m)dt}\Omega(a,m)\Big| e^{\int_{0}^{t}\alpha(P_{n+1}(m,s),m)ds}-e^{\int_{0}^{t}\alpha(P_n(m,s),m)ds}\Big| \nonumber \\
\!\!\!\!&+&\!\!\!\! \Big|\int_{0}^{t}\!\!\!e^{\int_{s}^{t}u'(m)dr}e^{-\int_{t}^{s}\alpha(P_{n+1}(m,r),m)dr}\!\!\!\int_{0}^{m}\!\!\!\!\!\!F(P_{n+1}(s,z),z)dzds \nonumber\\ 
\!\!\!\!&-&\!\!\!\! \int_{0}^{t}\!\!\!e^{\int_{s}^{t}u'(m)dr}e^{-\int_{t}^{s}\alpha(P_{n}(m,r),m)dr}\!\!\!\int_{0}^{m}\!\!\!\!\!\!F(P_{n}(s,z),z)dzds\Big| \nonumber\\
\!\!\!\!&=&\!\!\!\! I_1+ I_2
\end{eqnarray}
Recalling definitions of $\pi$ and $g$  and hypothesis \textbf{A3} and \textbf{A4} we have that\\
 $\pi_{-(t-\overline{\tau})}(m)<m<g(1)$ so
\begin{eqnarray}
e^{-u(m)t}\Omega\left(\overline{\tau},\pi_{-(t-\overline{\tau})}(m)\right)<\epsilon.
\end{eqnarray}
By hypothesis \textbf{B2} we can estimate $I_1$ as follows 
\begin{eqnarray}
\label{firstrow}
\Big|\!\!\!\!\!\!&e&\!\!\!\!\!\!^{\int_{0}^{t}\alpha(P_{n+1}(m,s),m)-\alpha(P_n(m,s),m)ds}\Big|\Big|e^{\int_{0}^{t}\alpha(P_n(m,s),m)ds}\Big| \nonumber\\
&+&\Big| e^{\int_{0}^{t}\alpha(P_{n}(m,s),m)-\alpha(P_{n+1}(m,s),m)ds}\Big|\Big|e^{\int_{0}^{t}\alpha(P_{n+1}(m,s),m)ds}\Big|\nonumber \\
&\leq& e^{\int_{0}^{t}|P_n(m,s)|ds}e^{\int_{0}^{t}|P_{n+1}(m,s)-P_{n}(m,s)|ds} \nonumber \\
&+&e^{\int_{0}^{t}|P_{n+1}(m,s)|ds}e^{\int_{0}^{t}|P_{n}(m,s)-P_{n+1}(m,s)|ds} \nonumber \\
&\leq& 2e^{\int_{0}^{t}|P_n(m,s)|ds} \int_{0}^{t}|P_{n+1}(m,s)-P_n(m,s)|ds  \nonumber \\
&\leq&  2e^{\tau|P_n(m,s)|}\sup{|P_{n+1}(m,s)-P_n(m,s)|}.
\end{eqnarray}
Let us now focus on $I_2$. By adding and subtracting the term
\begin{displaymath}
\int_{0}^{t}\!\!\!e^{\int_{s}^{t}u'(m)dr}e^{-\int_{t}^{s}\alpha(P_{n+1}(m,r),m)dr}\!\!\!\int_{0}^{m}\!\!\!\!\!\!F(P_{n}(s,z),z)dzds
\end{displaymath} 
and defining $\widetilde{A}=\sup{\left\{e^{\int_{s}^{t}u'(m)dr}\right\}}>0$ we obtain the following estimate
\begin{eqnarray}
\label{59}
|C_{n+1}\!\!\!\!&-&\!\!\!\!C_{n}| \nonumber\\
\!\!\!\!&\leq&\!\!\!\! 2e^{\tau|P_n(m,s)|}\sup{|P_{n+1}(m,s)-P_n(m,s)|} \nonumber\\
\!\!\!\!&+&\!\!\!\!\int_{0}^{t}\!\!\!\widetilde{A} \Big| e^{-\int_{t}^{s}\alpha(P_{n+1}(m,r),m)dr}\Big| \nonumber\\
\!\!\!&\times&\!\!\!\int_{0}^{g(1)}\!\!\!\!\!\!\big|F(P_{n+1}(s,z),z)-F(P_{n}(s,z),z)\big|dzds\nonumber\\
\!\!\!\!&+&\!\!\!\!\int_{0}^{t}\!\!\! \widetilde{A} \Big|e^{-\int_{t}^{s}\alpha(P_{n+1}(m,r),m)dr}-e^{-\int_{t}^{s}\alpha(P_{n}(m,r),m)dr}\Big|\nonumber\\
\!\!\!&\times&\!\!\!\int_{0}^{g(1)}\!\!\!\!\!\!\big|F(P_{n}(s,z),z)\big|dzds
\end{eqnarray}
This estimates involves computations perfectly analogous to the ones for $I_1$ except for the the term $\Big|F(P_{n+1}(s,z),z)-F(P_{n}(s,z),z)\Big|$.\\
 By using the hypothesis \textbf{B2} we have that
\begin{eqnarray}
\label{FP1}
\Big|F(P_{n+1}(s,z),z)\!\!\!\!\!&-&\!\!\!\!\!F(P_{n}(s,z),z)\Big| \nonumber\\
&\leq&\Big|\alpha(P_{n+1}(m,t),m)P_{n+1}(m,t)-\alpha(P_{n}(m,t),m)P_{n}(m,t)\Big|\nonumber\\
&+&\Big|F_{C_{n+1}}-F_{C_{n}}\Big|\nonumber\\
\!\!\!\!\!&\leq&\!\!\!\!\! k_a \Big|P_{n+1}(m,t)-P_{n}(m,t)\Big|+\Big|F_{C_{n+1}}-F_{C_{n}}\Big|.
\end{eqnarray}
We now estimate separately the term $\Big|F_{C_{n+1}}-F_{C_{n}}\Big|$ in the following way
\begin{eqnarray*}
\Big|F_{C_{n+1}}\!\!\!\!\!&-&\!\!\!\!\!F_{C_{n}} \Big|\\
\!\!\!\!\!&\leq&\!\!\!\!\!\Bigg|\exp\!\!\left[\frac{\widetilde{\alpha}(P_{n+1}(m,t),m)}{\sigma(m)}\!\!-\!\!\left(\frac{\widetilde{\alpha}(P_{n+1}(t,m),m)}{\sigma(m)}\!-\! \ln(\Omega(a,m))\!\!\right)e^{-\sigma(m)t}\!\right]\\
\!\!\!\!\!&-&\!\!\!\!\!\exp\!\!\!\ \left[\frac{\widetilde{\alpha}(P_{n}(m,t),m)}{\sigma(m)}-\left(\frac{\widetilde{\alpha}(P_{n}(t,m),m)}{\sigma(m)}-\ln(\Omega(a,m))\right)e^{-\sigma(m)t}\right] \Bigg|.
\end{eqnarray*}
This can be rewritten as
\begin{eqnarray}
\label{fcresult}
\Big|F_{C_{n+1}}-F_{C_{n}} \Big|&\leq& \Bigg| e^{\frac{\widetilde{\alpha}(P_{n+1}(m,t),m)}{\sigma(m)}}- e^{-\frac{\widetilde{\alpha}(P_{n+1}(m,t),m)}{\sigma(m)}e^{-\sigma(m)t}}\nonumber\\
&-& \left(e^{\frac{\widetilde{\alpha}(P_{n+1}(m,t),m)}{\sigma(m)}}-e^{-\frac{\widetilde{\alpha}(P_{n}(m,t),m)}{\sigma(m)}e^{-\sigma(m)t}}\right) \Bigg|.
\end{eqnarray}

Now, if we sum and subtract in \eqref{fcresult} the quantity
\begin{displaymath}
 e^{\frac{\widetilde{\alpha}(P_{n}(m,t),m)}{\sigma(m)}}e^{-\frac{\widetilde{\alpha}(P_{n+1}(m,t),m)}{\sigma(m)}e^{-\sigma(m)t}}
\end{displaymath}
we obtain
\begin{eqnarray*}
\Big|F_{C_{n+1}}-F_{C_{n}}\Big| \!\!\!\!&\leq& \!\!\!\! \Bigg| e^{\frac{\widetilde{\alpha}(P_{n+1}(m,t),m)}{\sigma(m)}} -e^{\frac{\widetilde{\alpha}(P_{n}(m,t),m)}{\sigma(m)}}\Bigg| \Bigg| e^{-\frac{\widetilde{\alpha}(P_{n+1}(m,t),m)}{\sigma(m)}e^{-\sigma(m)t}}\Bigg| \\
\!\!\!\!&+&\!\!\!\!  \Bigg| e^{-\frac{\widetilde{\alpha}(P_{n+1}(m,t),m)}{\sigma(m)}e^{-\sigma(m)t}}- e^{-\frac{\widetilde{\alpha}(P_{n}(m,t),m)}{\sigma(m)}e^{-\sigma(m)t}}\Bigg|\Bigg|e^{\frac{\widetilde{\alpha}(P_{n}(m,t),m)}{\sigma(m)}}\Bigg|.
\end{eqnarray*} 
So we can estimate
 \begin{eqnarray}
\label{fcfin}
\Big|F_{C_{n+1}}-F_{C_{n}}\Big|\!\!\!&\leq&\!\!\! \Bigg|  e^{\frac{\widetilde{\alpha}(P_{n+1}(m,t),m)}{\sigma(m)}} -e^{\frac{\widetilde{\alpha}(P_{n}(m,t),m)}{\sigma(m)}}  \Bigg|M \nonumber \\
\!\!\!&+& \!\!\! \Bigg| e^{-\frac{\widetilde{\alpha}(P_{n+1}(m,t),m)}{\sigma(m)}e^{-\sigma(m)t}}- e^{-\frac{\widetilde{\alpha}(P_{n}(m,t),m)}{\sigma(m)}e^{-\sigma(m)t}}\Bigg|\overline{M}\nonumber\\
\!\!\!&\leq&\!\!\! A M e^{k_a(t-r)}\Big|\frac{\alpha(P_{n+1}(m,t),m)}{\sigma(m)}- \frac{\alpha(P_{n}(m,t),m)}{\sigma(m)}\Big|\nonumber\\
\!\!\!&+&\!\!\! A \overline{M}e^{k_a(t-r)}\Big|\frac{\alpha(P_{n+1}(m,t),m)}{\sigma(m)}e^{-\sigma(m)t}\nonumber\\
\!\!\!&-&\!\!\!\frac{\alpha(P_{n}(m,t),m)}{\sigma(m)}e^{-\sigma(m)t} \Big|\nonumber\\
\!\!\!&\leq&\!\!\! A Me^{k_a \overline{\tau}} k_a |P_{n+1}(m,t)-P_{n}(m,t)|\nonumber\\
\!\!\!&+&\!\!\!A \overline{M} e^{k_a \overline{\tau}} k_a e^{-\sigma(m)t}|P_{n+1}(m,t)-P_{n}(m,t)|.
\end{eqnarray}

By collecting our estimates \eqref{fcfin}, \eqref{firstrow}, \eqref{FP}, \eqref{FP1} we have
\begin{eqnarray}
\label{final}
|C_{n+1}-C_{n}|\!\!\!\!\!&\leq&\!\!\!\!\! 2\epsilon e^{\tau|P_n(m,s)|} \sup{|P_{n+1}(m,s)-P_n(m,s)|} \nonumber \\
\!\!\!\!\!&+&\!\!\!\!\!\widetilde{A}e^{\tau|P_n(m,s)|}k_a \big|P_{n+1}(m,t)-P_{n}(m,t)\big|\nonumber\\
\!\!\!\!\!&+&\!\!\!\!\! A Me^{k_a \overline{\tau}}k_a  \big|P_{n+1}(m,t)-P_{n}(m,t)\nonumber\big|\\
\!\!\!\!\!&+&\!\!\!\!\! A \overline{M} e^{k_a \overline{\tau}}k_a e^{-\sigma(m)t}|P_{n+1}(m,t)-P_{n}(m,t)\big|\nonumber\\
\!\!\!\!\!&-&\!\!\!\!\!2\epsilon k \tau g(1) e^{\tau|P_n(m,s)|} \sup{|P_{n+1}(m,s)-P_n(m,s)|}
\end{eqnarray}
Finally from \eqref{final} it follows the global stability for the $C-cells$ that is
\begin{eqnarray}
|C_{n+1}-C_{n}|\rightarrow 0 \quad \textrm{as $n\rightarrow \infty$}.
\end{eqnarray}
Hence we proved the following Theorem.
\begin{teo}[Global exponential stability]
\label{T4}
If the hypotheses \eqref{A5}, $\bf{B1}$, ${\bf B2}$, ${\bf A1}$ - ${\bf A4}$,  hold, with $k_b$ such that $\frac{k_b(1+2\varsigma)}{I}<1$, then the trivial solution $C\equiv 0$ of \eqref{eqC}  is globally exponentially stable in the sense of definition \bf{Def 3}.
\end{teo}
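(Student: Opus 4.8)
The plan is to reproduce, for the damaged cells, the successive-approximation scheme already deployed for the $N$-cells (Theorems \ref{T1}, \ref{T2}) and the $P$-cells (Theorem \ref{T3}), exploiting the hierarchical structure of the model: the stability of $C$ is driven by that of $P$, which itself rests on the stability of $N$. Concretely, I would define the iterates $(C_n)_{n\in\mathbb{N}}$ by freezing the proliferating density at the $n$-th approximant $P_n$ inside the Gompertzian solution formula \eqref{Csol}, so that each $C_n$ is an explicit functional of $P_n$ and of the initial datum $\Omega$.

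First I would establish the invariance estimate $|C_n(m,t)|\leq\overline{K}\epsilon$, uniformly in $n$. By hypothesis $\textbf{A3}$ the velocity $u$ vanishes at the endpoints and the characteristic satisfies $\pi_{-(t-\overline{\tau})}(m)<m<g(1)$, so the data stay in the stability window and $\Omega(\overline{\tau},\pi_{-(t-\overline{\tau})}(m))<\epsilon$. The homogeneous factor $e^{-\int_0^t(u'(m)-\alpha(P_n(m,s),m))\,ds}$ is then controlled through the constant $A=\inf\{e^{-\int_0^t u'(m)\,dt}\}>0$, the Lipschitz bound on $\alpha$ from $\textbf{B2}$, and the preliminary bound \eqref{prelP} on $P_n$, yielding \eqref{1}. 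The delicate forcing term $F_{C_n}$ is shown to satisfy $|F_{C_n}|<1$ by invoking $\textbf{A4}$ ($0<\sigma\ll1$), which makes $|\sigma(m)\int_{\underline{\tau}}^{\overline{\tau}}c\log(c)\,da|\ll1$, together with $\eqref{A5}$ ($\Omega<1$, hence $c<1$), which keeps each nested Gompertz exponential below $1$; collecting these gives \eqref{FP} and the claimed uniform bound.

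Next I would prove convergence of $(C_n)$ by estimating $|C_{n+1}-C_n|$, split as $I_1+I_2$ according to the decomposition \eqref{57}. The term $I_1$ compares the two homogeneous exponentials and reduces, via $\textbf{B2}$ and the elementary inequality $|e^x-e^y|\le e^{\max}|x-y|$, to $\sup|P_{n+1}-P_n|$ as in \eqref{firstrow}. For $I_2$ I would add and subtract the mixed term so as to isolate $|F_{C_{n+1}}-F_{C_n}|$; the Lipschitzianity of $x\mapsto x\alpha(m,x)$ handles the polynomial part \eqref{FP1}, while the difference of the two double-exponentials is linearized in \eqref{fcfin} using again $|e^x-e^y|\le e^{\max}|x-y|$ and $\textbf{B2}$, producing a bound proportional to $k_a|P_{n+1}-P_n|$.

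The main obstacle is precisely the Gompertzian nonlinearity $-\sigma(m)c\ln(c)$, which renders $F_C$ a \emph{nested} exponential in $P$ of the form $\exp\!\big(\widetilde{\alpha}/\sigma-(\widetilde{\alpha}/\sigma-\ln\Omega)e^{-\sigma t}\big)$: controlling the difference of two such terms cannot be done by a single Lipschitz step and genuinely requires the smallness assumptions $\textbf{A4}$ and $\eqref{A5}$ to keep the outer exponential bounded while the Lipschitz estimate acts on the inner argument. Once all contributions are collected as in \eqref{final}, each is a bounded multiple of $|P_{n+1}-P_n|$ or $\sup|P_{n+1}-P_n|$; invoking the global exponential stability of $P$ from Theorem \ref{T3} forces $|C_{n+1}-C_n|\to0$ as $n\to\infty$, and the exponential decay rate $e^{-p(t-\overline{\tau})}$ inherited from the $N$- and $P$-iterates yields the global exponential stability of $C\equiv0$ in the sense of $\textbf{Def 3}$.
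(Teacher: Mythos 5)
Your proposal follows essentially the same route as the paper: you define the iterates $C_n$ by freezing $P$ at $P_n$ in the Gompertzian solution formula, prove the uniform invariance bound $|C_n|\leq\overline{K}\epsilon$ using \textbf{A3}, \textbf{A4}, \textbf{B2} and \eqref{A5}, split $|C_{n+1}-C_n|$ into the same two pieces $I_1+I_2$, linearize the nested exponentials in $F_{C_n}$ exactly as in \eqref{fcfin}, and conclude via the global exponential stability of $P$ from Theorem \ref{T3}. This matches the paper's argument step for step, including the correct identification of the nested Gompertz exponential as the point where the smallness assumptions \textbf{A4} and \eqref{A5} are genuinely needed.
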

In conclusion we can summarize  the results obtained  in Theorems \ref{T2}, \ref{T3}, \ref{T4} as follows
\begin{teo}[Global exponential stability for the total system]
If the hypotheses $\bf{B1}$, ${\bf B2}$, ${\bf A1}$ - ${\bf A4}$ holds, with $k_b$ such that $\frac{k_b(1+2\varsigma)}{I}<1$, then, the trivial solution $(N,P,C)\equiv (0,0,0)$, is globally exponentially stable in the sense of definition {\bf Def 3} for the problem \eqref{eqN}, \eqref{Psol}, \eqref{eqC}.
\end{teo}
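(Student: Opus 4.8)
The plan is to obtain this statement as an immediate consequence of the three component results already established, Theorems \ref{T2}, \ref{T3} and \ref{T4}, by assembling them under a suitable product norm on the triple $(N,P,C)$. First I would equip the space of solution triples with the norm
\begin{displaymath}
\|(N,P,C)\|=\max\{\|N\|,\|P\|,\|C\|\},
\end{displaymath}
built from the weighted sup-norms introduced in the previous subsections, so that convergence of the triple to $(0,0,0)$ is equivalent to convergence of each coordinate separately. The goal is then to exhibit a single pair of positive constants $(c,d)$ realizing the bound demanded by \textbf{Def 3} simultaneously in all three components.

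The key structural observation is that the coupling of the system is \emph{triangular}: the evolution of $N$ in \eqref{eqN} is self-contained, the solution $P$ in \eqref{Psol} depends on $N$ only through the forcing $F(N(m,t),m)$, and $C$ in \eqref{eqC} depends on $P$ (hence on $N$) through $\alpha(P(m,t),m)$ and $F_{C}$. I would therefore process the hierarchy in order. Under the smallness assumption $\frac{k_b(1+2\varsigma)}{I}<1$, Theorem \ref{T2} supplies constants $c_N,d_N>0$ with $\lim_{t\to\infty}|N^{\phi}|\leq c_N e^{-d_N(t-\overline{\tau})}$. Inserting this decay into the preliminary bound \eqref{prelP} and the convergence argument of Theorem \ref{T3} yields constants $c_P,d_P>0$ for $P$; finally, substituting the decay of $P$ into the collected estimate \eqref{final} and Theorem \ref{T4} produces $c_C,d_C>0$ for $C$.

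To conclude, I would set $d=\min\{d_N,d_P,d_C\}$ and $c=\max\{c_N,c_P,c_C\}$, so that every component satisfies $|z^{\phi}-\overline{z}|\leq c\,e^{-d(t-\overline{\tau})}$ on $[\overline{\tau},\infty)\times[0,g(1)]$, whence $\|(N,P,C)-(0,0,0)\|\leq c\,e^{-d(t-\overline{\tau})}$ in the product norm. This is exactly the global exponential stability of the trivial solution in the sense of \textbf{Def 3}.

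The main obstacle I anticipate is verifying that the hierarchical substitution does not degrade the exponential rates, i.e.\ that passing the decay of $N$ through the nonlinear forcing of the $P$-equation, and then the decay of $P$ through the Gompertzian terms of the $C$-equation, preserves a genuine exponential bound rather than introducing polynomially growing prefactors that would force $d=0$. This amounts to checking that the constants $\overline{K}$, $\widetilde{A}$, $M$, $\overline{M}$ entering \eqref{prelP} and \eqref{final} remain bounded uniformly in $t$; the Lipschitz hypotheses \textbf{B1} and \textbf{B2}, together with the smallness condition on $k_b$, are precisely what guarantee this and keep $d=\min\{d_N,d_P,d_C\}$ strictly positive.
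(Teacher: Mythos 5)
Your proposal is correct and follows essentially the same route as the paper, which presents this theorem as a direct summary of Theorems \ref{T2}, \ref{T3} and \ref{T4}, exploiting the same triangular structure ($N$ autonomous, $P$ driven by $N$, $C$ driven by $P$). Your explicit product norm and the choice $d=\min\{d_N,d_P,d_C\}$, $c=\max\{c_N,c_P,c_C\}$ simply make precise the assembly step that the paper leaves implicit.
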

        
\section{Numerical simulations}

In order to confirm the stability results obtained analytically, we performed some numerical simulations with suitable life-parameters on the discretization of the equations \eqref{Nsol}, \eqref{Psol}, \eqref{Csol} with stationary solutions defined as $N^{*}$, $P^{*}$, $C^{*}$.\\
To approximate the integral terms of \eqref{Nsol}, \eqref{Psol}, \eqref{Csol} that here we denote as $A_1$, $A_2$ and $A_3$, we used a composite trapezoidal quadrature formula. Note that $A_1$, $A_2$, $A_3$ turn out to be block matrices whose dimension depends on the discretization step.

As first step we made use of a Picard iteration for the solution of the linear fixed point problem arising from the discretization of \eqref{Nsol} given by
\begin{eqnarray*}
N^{k+1}=A_{1}N^{k}+b_{1}. 
\end{eqnarray*}
By using an error estimate $||N^{k}-N^{k-1}||<toll$, we denote by $N^{k}\approx N^{*}$ the $k-th$ final iterate (depending naturally on the fixed tolerance). Then we insert $N^{k}$ into the linear fixed point problem arising from the discretization of \eqref{Psol} and apply the second Picard iteration
\begin{eqnarray*}
P^{k+1}=A_{2}P^{k}+b_{2}(N^{k}).
\end{eqnarray*}
With the same procedure, once obtainted $P^{k}\approx P^{*}$ such that is satisfied the error estimate $||P^{k}-P^{k-1}||<toll$, we insert finally $N^{k}$ and $P^{k}$ into the linear fixed point problem arising from the discretization of \eqref{Csol}
\begin{eqnarray*}
C^{k+1}=A_{3}C^{k}+b_{2}(N^{k},P^{k}).
\end{eqnarray*}
In order to illustrate the convergence behaviour we plot in Figure $1$ and $2$:

\begin{itemize}
	\item the iteration error defined for a generic $F$ as $e^{k}=||F^{k}-F^{*}||$;
	\item the asymptotic rate of convergence estimated as $r^{k}=\frac{||e^{k}||}{||e^{k-1}||}$
\end{itemize}
 
The obtained results confirm the stability properties proved analytically in Section $5$, and also observed from the biological point of view: the stability of the stem cell population in a physiological tissue stabilize the global homeostasis of the environment.

As future work we plan to investigate numerically more sophisticated real-life models.

 
 \begin{figure}[!h]
 \centering
   \includegraphics[width=5cm]{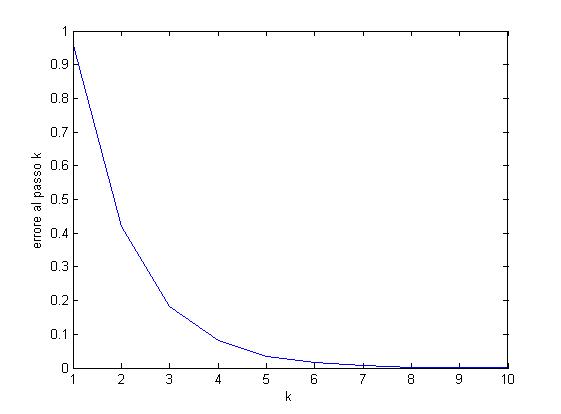}
  \quad 
   \includegraphics[width=5cm]{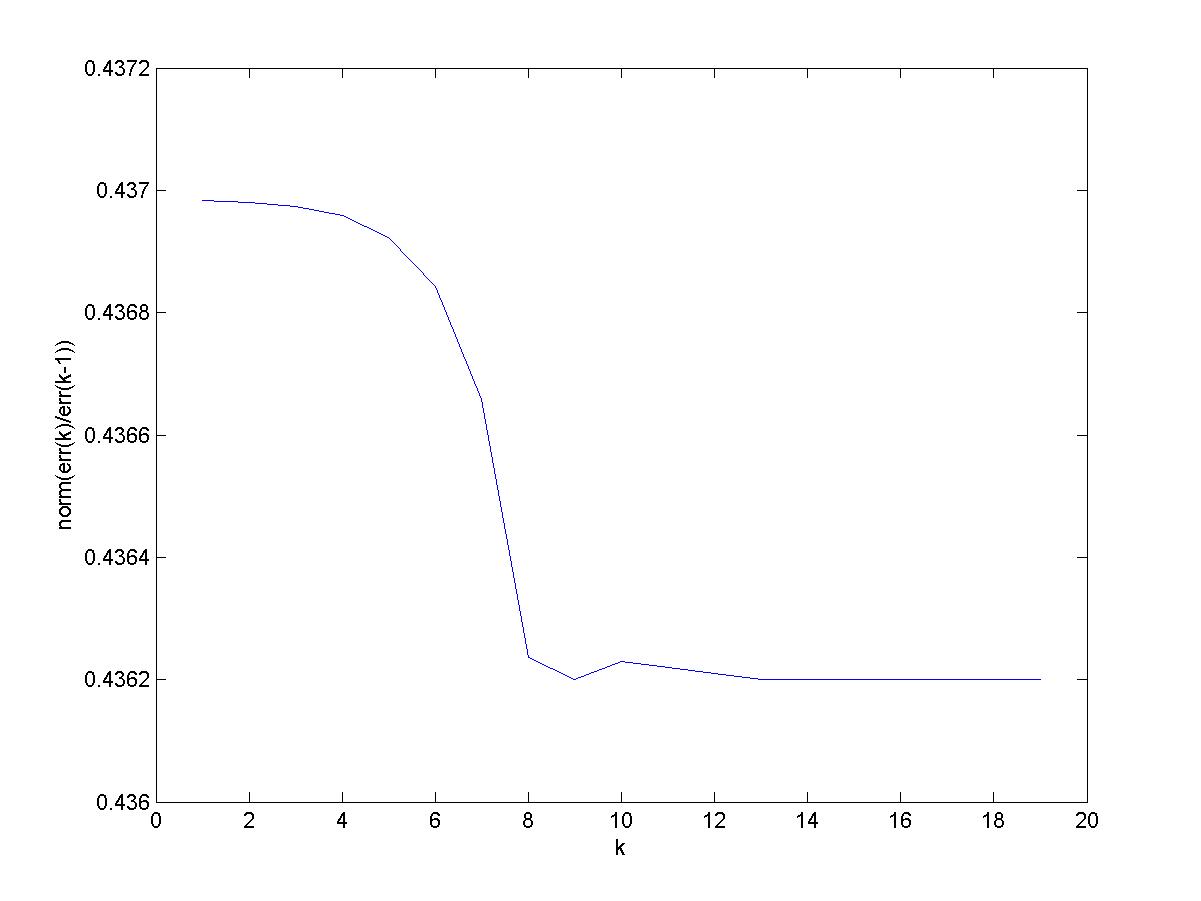}
 \caption{Error and rate of convergence of the Picard iteration on $N(m,t)$}
 \end{figure}
 \begin{figure}[!h]
   \includegraphics[width=5cm]{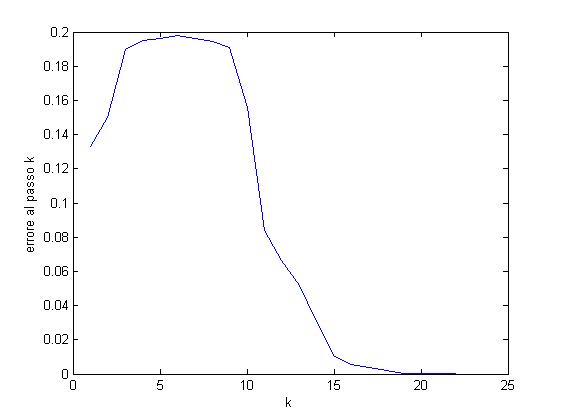}
   \quad
   \includegraphics[width=5cm]{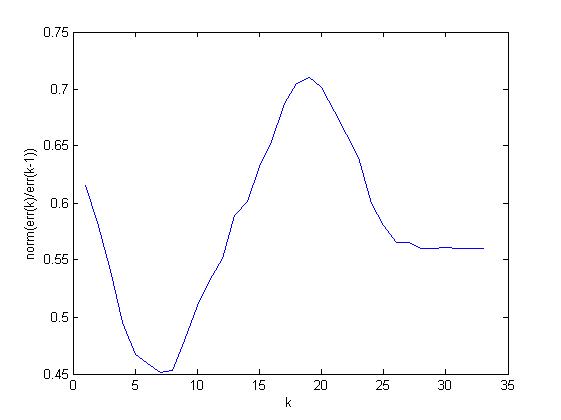}
 \caption{Error and rate of convergence of the Picard iteration on $C(m,t)$}
 \end{figure}

\section{Conclusions and future work}
We setup a model able to reproduce, the experimental validated evidence, that a good presence of stem cells in a tissue environment improve critically the activity of self-regeneration and repair of the genetic damages that could accumulate physiologically and modify the healthy cell life cycle causing a wide range of diseases, in particular in this work we considered a kind damage that cause iper-activity of cells at low densities and make them more aggressive.\\
This model was inspired by reading about the new trend of medical research that is attempting to power in-vitrio this capability of stem cells in order to make possible the therapy of various types of diseases by means of a  transplantation of a critical density of stem cells physically in the damaged areas.\\
In this context, this can be proposed, once computerized, as first rudimental tool to complement the work of physicians and biologists to test the appropriate concentrations and the timing of the reaction.\\
Our future work, as well as computerization, is aimed to test the validity and robustness of our model through its application to other types of cellular damages.

\section*{Acknowlegments}
The authors would like to thank {\em Prof. Nicola Guglielmi} (Department of Information Engineering, Computer Science and Mathematics, University of L'Aquila) for his support and helpful suggestions and discussions in setting up the numerical simulations in this paper.

\end{document}